\let\div\undefined
\DeclareMathOperator{\div}{div}
\DeclareMathOperator{\curl}{curl}
\newcommand{\du}{\,\mathrm{d}}
\newcommand{\R}{\mathbb{R}}
\newcommand{\xb}{{\mathbf{x}}}
\newcommand{\vb}{{\mathbf{v}}}
\newcommand{\Vb}{{\mathbf{V}}}
\newcommand{\Bb}{{\mathbf{B}}}
\newcommand{\Eb}{{\mathbf{E}}}
\newcommand{\Fb}{{\mathbf{F}}}
\newcommand{\Gb}{{\mathbf{G}}}
\newcommand{\Sb}{{\mathcal{S}}}
\newcommand{\bt}{\mathbf{t}}
\newcommand{\bn}{\mathbf{n}}
\newcommand{\phib}{{\boldsymbol{\phi}}}
\newcommand{\xib}{{\boldsymbol{\xi}}}
\newcommand{\Xib}{{\boldsymbol{\Xi}}}
\newcommand{\tub}{\tilde{\mathbf{u}}}
\newcommand{\teb}{{\tilde{\mathbf{e}}}}
\newcommand{\tE}{{\tilde{E}}}
\newcommand{\tjb}{{\tilde{\mathbf{j}}}}
\newcommand{\tEb}{{\tilde{\mathbf{E}}}}
\newcommand{\tBb}{{\tilde{\mathbf{B}}}}
\newcommand{\tAb}{{\tilde{\mathbf{A}}}}
\newcommand{\tPhi}{{\tilde{\Phi}}}
\newcommand{\tab}{\tilde{\mathbf{a}}}
\newcommand{\tbb}{{\tilde{\mathbf{b}}}}
\newcommand{\tb}{{\tilde{b}}}
\newcommand{\tB}{{\tilde{B}}}
\newcommand{\trho}{{\tilde{\rho}}}
\newcommand{\trhob}{{\tilde{\boldsymbol{\rho}}}}
\newcommand{\tphib}{{\tilde{\boldsymbol{\phi}}}}
\newcommand{\tf}{\tilde{f}}
\newcommand{\tJb}{\tilde{\mathbf{J}}}
\newcommand{\Jb}{\mathbf{J}}
\newcommand{\tvpb}{\boldsymbol{\tilde{\varphi}}}
\newcommand{\tvp}{\tilde{\varphi}}
\newcommand{\WM}{\ensuremath{\mathbb{W}}}
\newcommand{\JJ}{\ensuremath{\mathbb{J}}}
\newcommand{\tM}{\ensuremath{\mathsf{\tilde{M}}}}
\newcommand{\C}{\ensuremath{\mathsf{C}}}
\newcommand{\D}{\ensuremath{\mathsf{D}}}
\newcommand{\G}{\ensuremath{\mathsf{G}}}
\newcommand{\tBB}{\ensuremath\tilde{{\mathbb{B}}}}
\newcommand{\NN}{\ensuremath{\mathbb{N}}}
\newcommand{\tom}{\ensuremath{\tilde{\Omega}}}
\newcommand{\tLa}{\ensuremath{\tilde{\Lambda}}}
\newcommand{\tLab}{\ensuremath{\tilde{\boldsymbol{\Lambda}}}}
\newcommand{\tLaB}{\ensuremath{\tilde{\mathbf{\Lambda}}}}
\newcommand{\tLaBB}{\ensuremath{\tilde{\mathbb{\Lambda}}}}
\newcommand{\kb}{\mathbf{k}}
\newtheorem{theorem}{Theorem}[section]
\newtheorem{prop}[theorem]{Proposition}
\newtheorem{mydef}[theorem]{Definition}
\numberwithin{equation}{section}
\title{Perfect Conductor Boundary Conditions for Geometric Particle-in-Cell Simulations of the Vlasov--Maxwell System in Curvilinear Coordinates\thanks {\textbf{Funding}: This work has been carried out within the framework of the EUROfusion Consortium and has received funding from the Euratom research and training programme 2014-2018 and 2019-2020 under grant agreement No 633053. The views and opinions expressed herein do not necessarily reflect those of the European Commission. B.~P.~has also been
supported by Deutsche Forschungsgemeinschaft (DFG) through the TUM International Graduate School
of Science and Engineering (IGSSE).}}
\author[1,2]{Benedikt Perse}
\author[3,1]{Katharina Kormann}
\author[1,2]{Eric Sonnendr{\"u}cker}
\affil[1]{Max Planck Institute for Plasma Physics, Boltzmannstr.~2, 85748 Garching, Germany (\texttt{\{benedikt.perse, katharina.kormann, eric.sonnendruecker\}@ ipp.mpg.de}).}
\affil[2]{Department of Mathematics, Technical University of Munich, Boltzmannstr.~3, 85748 Garching, Germany.}
\affil[3]{Department of Information Technology, Uppsala University, Box 337, 751 05 Uppsala, Sweden (\texttt{katharina.kormann@it.uu.se}).}
\begin{document}

\maketitle

\begin{abstract}
Structure-preserving methods can be derived for the Vlasov--Maxwell system from a discretisation of the Poisson bracket with compatible finite-elements for the fields and a particle representation of the distribution function. These geometric electromagnetic particle-in-cell (GEMPIC) discretisations feature excellent conservation properties and long-time numerical stability. This paper extends the GEMPIC formulation in curvilinear coordinates to realistic boundary conditions. We build a de Rham sequence based on spline functions with clamped boundaries and apply perfect conductor boundary conditions for the fields and reflecting boundary conditions for the particles. The spatial semi-discretisation forms a discrete Poisson system. Time discretisation is either done by Hamiltonian splitting---yielding a semi-explicit Gauss conserving scheme---or by a discrete gradient scheme applied to a Poisson splitting---yielding a semi-implicit energy-conserving scheme. Our system requires the inversion of the spline finite element mass matrices, which we precondition with the combination of a Jacobi preconditioner and the spectrum of the mass matrices on a periodic tensor product grid.
\end{abstract}

\section{Introduction}
In \cite{perse2021geometric}, we have introduced a general coordinate transformation into the geometric electromagnetic particle-in-cell (GEMPIC) framework \cite{kraus2016gempic}. However, we assumed periodic boundary conditions, which also limits the coordinate transformation to periodic mappings. In this paper, we extend the GEMPIC framework to more realistic boundary conditions enabling the use of radial grids such as cylindrical or elliptical grids. In our numerical experiments, we focus on perfect conductor boundary conditions as described in \cite{guo1993global} to model a lossless metallic surface and for the particles, we assume specular reflection.

\subsection{Related work}
In 2012, Fichtl, Finn \& Cartwright \cite{fichtl2012arbitrary} proposed an electrostatic 2D2V code, where the fields are discretised with the finite difference method and the particle position and velocity are both pushed in logical coordinates. The code is momentum conserving and applies homogeneous Neumann boundary conditions for the fields.
Likewise, Delzanno et al. \cite{delzanno2013cpic} described an electrostatic 3D PIC code called CPIC with finite difference discretisation. The code uses a hybrid particle pusher and allows for mesh refinement. Absorbing and reflecting particle boundaries are tested on a sinusoidally distorted grid and on an annulus. For the fields Neumann or Dirichlet boundary conditions can be applied.

In \cite{chacon2019}, Chen \& Chac\'on have added perfect conductor boundary conditions and a reflecting particle boundary to their electromagnetic 2D3V PIC code in curvilinear coordinates for the Vlasov--Darwin model.

More recently, Xiao \& Qin \cite{xiao2020explicit} extended their geometric PIC code \cite{xiao2018structure} to orthogonal curvilinear coordinate transformations maintaining the explicit time discretisation via a Hamiltonian splitting. The explicit time splitting is obtained by using a logical velocity variable. However, it is only possible for the special case of an orthogonal transformation. They apply perfect electric conductor boundary conditions in two directions and periodicity in the third direction and particles hitting the boundary are removed from the simulation.

A different approach was taken by Wang, Qin, Sturdevant \& Chang \cite{wang2020geometric} using the structure-preserving framework to build an electrostatic 2D PIC code on unstructured grids with fully kinetic ions and adiabatic electrons. A de Rham complex is constructed with Whitney forms assuming homogeneous Dirichlet boundary conditions for the fields and a reflecting boundary for the particles. This setup allows for simulations of ion Bernstein waves in a 2D magnetized plasma.

Apart from PIC methods, Colella, Dorr, Hittinger \& Martin \cite{colella2011high} used a finite volume discretisation of the Vlasov--Poisson system with Dirichlet boundary conditions to perform simulations in a D-shaped annular geometry. In \cite{mccorquodale2015high}, this code was extended to mapped multi block grids and tested on a sinusoidally distorted mesh with advection problems. Vogman et al. \cite{vogman2018conservative} introduced a continuum code with finite volume discretisation for an electrostatic axisymmetric cylindrical Vlasov--Poisson system using specular reflection as particle boundary conditions and Dirichlet boundary conditions for the fields. 

\subsection{Outline}
This paper is structured in the following way:
In Section \ref{sec:VMcurvilinear}, the coordinate transformation is introduced into the Vlasov--Maxwell system. First, we examine the natural boundary conditions of the weak formulation of the Vlasov--Maxwell system. Then, the notation for curvilinear coordinate transformations is introduced and applied to the electromagnetic fields.
Section \ref{sec:spatial_discretiastion} approaches the structure-preserving semi-discretisation of the Vlasov--Maxwell system. First, we construct  conforming spline finite elements with boundary conditions that form a discrete de Rham sequence in logical coordinates. Then, we represent the electromagnetic fields in the spline basis and apply perfect conductor boundary conditions. For the particles, reflecting boundary conditions are considered. Last, we derive the equations of motion coming from  the semi-discrete Poisson structure for perfect conductor boundary conditions. 
In Section \ref{sec:temporal_discretisation}, we discretise the equations of motion in time and briefly comment on the handling of the particle boundary conditions.
Section \ref{sec:preconditioner} discusses preconditioners for the conjugate gradient solvers of the mass matrices. 
The implementation of the boundary conditions is verified in a numerical test case with various coordinate transformations in Section \ref{sec:numericsboundary}.
Section \ref{sec:conclusion} concludes the paper with a short summary and an outlook to future work.

\section{The Vlasov--Maxwell System in Curvilinear Coordinates} \label{sec:VMcurvilinear}

\subsection{The Vlasov--Maxwell System}
The Vlasov equation in physical phase-space coordinates $(\xb,\vb)$ for a species $s$ with charge $q_s$ and mass $m_s$ reads
{\small
\begin{align}\label{V}
&\frac{\partial f_s(\xb,\vb,t)}{\partial t}+ \vb \cdot \nabla_{\xb} f_s(\xb,\vb,t)+\frac{q_s}{m_s} (\Eb(\xb,t)+\vb\times \Bb(\xb,t))\cdot \nabla_\vb f_s(\xb,\vb,t)=0,
\end{align} }
where $\Eb$ and $\Bb$ denote the electromagnetic fields, which are evolved according to Maxwell's equations,
\begin{subequations}
\begin{align} \label{m1}
\frac{\partial \Eb(\xb,t)}{\partial t} &= \nabla_{\xb} \times \Bb(\xb,t) - \Jb(\xb,t),
\\ \label{m2}
\frac{\partial  \Bb(\xb,t)}{\partial t}&=-\nabla_{\xb} \times \Eb(\xb,t),
\\ \notag
\nabla_{\xb} \cdot \Eb(\xb,t)&=\rho(\xb,t),
\\ \notag
\nabla_{\xb} \cdot  \Bb(\xb,t) &=0.
\end{align} 
\end{subequations}
The system couples through the first two moments of the particle distribution function $f_s$,  the charge and the current densities,
\begin{align*}
\rho(\xb,t)=\sum_s q_s \int f_s(\xb,\vb,t) \du\vb, \ \Jb(\xb,t)=\sum_s q_s \int f_s(\xb,\vb,t) \vb \du\vb.
\end{align*}

The system is closed by suitable initial and boundary conditions.

For the solution of Maxwell's equation, we consider a mixed form with a weak description of Amp{\`e}re's and electric Gauss' law. Then, we have chosen the electric field to be an differential 1-form, $\Eb \in H(\curl,\Omega)$ and the magnetic field to be a differential 2-form,$\Bb \in H(\div,\Omega)$. Let $\boldsymbol{\varphi} \in H(\curl,\Omega)$ and $\psi \in H^1(\Omega)$ be test functions. Then, we obtain the following equations using the cross product form of the Divergence theorem \eqref{crossdivergencethm}:
\begin{subequations}
\begin{align}\label{weakformula1} 
\int_{\Omega} \boldsymbol{\varphi} \cdot \frac{\partial \Eb}{\partial t} \du\xb &= \int_{\Omega} \nabla_\xb \times \boldsymbol{\varphi} \cdot \Bb   \du\xb+\int_{\partial \Omega} (\Bb \times \boldsymbol{\varphi}) \cdot \bn \du\sigma-\Jb^\star(\boldsymbol{\varphi})(t),
\\ \label{Faraday}
\frac{\partial  \Bb}{\partial t}&=-\nabla_{\xb} \times \Eb,
\\ \label{weakformula2} \int_{\Omega} \nabla \psi \cdot \Eb  \du\xb &= \int_{\partial\Omega} \psi (\Eb \cdot \bn)  \du\sigma-\rho^\star( \psi)(t),
\\ \label{mGauss}
\nabla_{\xb} \cdot  \Bb &=0,
\end{align} 
\end{subequations}
where $\Jb^\star\in {H}^\star(\div,\Omega)$ and $\rho^\star \in {L^2}^\star(\Omega)$ are  linear functionals defined as 
$\Jb^\star(\boldsymbol{\varphi})(t)= \langle \boldsymbol{\varphi}, \Jb \rangle_{L^2}, \rho^\star( \psi)(t) = \langle \psi, \rho \rangle_{L^2} $ and $\bn$ denotes the unit outer normal.

In order to close the system, we need boundary conditions both for the Vlasov and the field equations. For realistic geometry in plasma physics, we assume a D-shaped toroidal mesh. This can be modeled by a coordinate system with one radial and two angular directions. In the next section, we will introduce a coordinate transformation, which would be typically used for a toroidal coordinate system. Then, we assume periodic boundary conditions for the two angular dimensions and some form of Dirichlet or Neumann boundary conditions for the radial direction. If the boundary is defined by the wall of the device, perfect conductor boundary conditions are most suitable.

\subsection{Coordinate Transformation}
We introduce a coordinate transformation with the goal of deriving a curvilinear description of the Vlasov--Maxwell system. 
First, we present our notation for the curvilinear coordinates before discussing how these can be consistently combined with differential forms. We consider a bijective coordinate transformation from the logical space $\tom:=[0,1]^3$ to the physical space $\Omega$, e.g.~a Torus in spherical coordinates. The transformation map is denoted by 
\begin{align*}
F \colon \tom \to \Omega \subset \R^3, \ \xib \mapsto
F(\xib)=\xb,
\end{align*}
 where $\xib=(
\xi_1,\xi_2,\xi_3)^\top, \xb=(x_1,x_2,x_3)^\top $ are the variables on the logical and physical mesh, respectively.

The matrix of the partial derivatives, the Jacobian matrix, and its determinant are defined as
\begin{align*}
\left(DF(\xib)\right)_{ij}&=\frac{\partial F_i(\xib)}{\partial \xi_j}=\frac{\partial x_i}{\partial \xi_j},\\
J_F(\xib)&=\det(DF(\xib)).
\end{align*}
We assume that the mapping is non-singular, i.e.~$J_F(\xib)\neq 0  \ \forall  \xib\in \tom$, and therefore, the Jacobian matrix is invertible. 
\begin{mydef}\label{def:derivativevectors}
The column vectors of the Jacobian matrix form the so-called covariant basis of the tangent space, 
\begin{align*}
\bt_i=\frac{\partial F(\xib)}{\partial \xi_i}=\frac{\partial \xb}{\partial \xi_i}, \ DF=(\bt_1|\bt_2|\bt_3)
\end{align*}
whereas the columns of the transposed inverse Jacobian matrix form the dual basis, which is called the contravariant basis of the cotangent space,
\begin{align*}
\bn_i=\frac{\partial \xi_i}{\partial \xb}, \ DF(\xib)^{-\top}=: N(\xib)=(\bn_1|\bn_2|\bn_3).
\end{align*} 
\end{mydef}

\begin{prop}\label{prop:bv}
The following relations hold between the covariant and the contravariant basis vectors: 
\begin{align*}
 \bn_1&= \frac{1}{J_F}  \bt_2 \times  \bt_3,& \bn_2 \times \bn_3 &= \frac{1}{J_F}  \bt_1,\\
 \bn_2&= \frac{1}{J_F}  \bt_3 \times  \bt_1,& \bn_3 \times \bn_1 &= \frac{1}{J_F} \bt_2, \\
 \bn_3&= \frac{1}{J_F}  \bt_1 \times  \bt_2, & \bn_1 \times \bn_2 &= \frac{1}{J_F}  \bt_3.
\end{align*}
\end{prop}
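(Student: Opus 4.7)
The plan is to exploit the fundamental duality $\bn_i \cdot \bt_j = \delta_{ij}$, which is an immediate consequence of the definitions: since $N(\xib) = DF(\xib)^{-\top}$, we have $N^\top DF = DF^{-1} DF = \Id$, and reading off the $(i,j)$-entry gives $\bn_i^\top \bt_j = \delta_{ij}$. Once this duality is in hand, the six identities will follow from the standard scalar triple product formula $(\ab \times \bb) \cdot \bc = \det(\ab | \bb | \bc)$ applied to the columns of $DF$ and of $N$.

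For the first column of identities, I would argue that $\bt_2 \times \bt_3$ is by construction orthogonal to both $\bt_2$ and $\bt_3$, while $(\bt_2 \times \bt_3) \cdot \bt_1 = \det(\bt_1 | \bt_2 | \bt_3) = \det(DF) = J_F$. Dividing by $J_F$ therefore produces a vector $\frac{1}{J_F}\bt_2\times\bt_3$ whose inner products against $\bt_1, \bt_2, \bt_3$ are exactly $1, 0, 0$, the same as those of $\bn_1$ by duality. Since $\{\bt_1,\bt_2,\bt_3\}$ is a basis of $\R^3$ (recall $J_F \neq 0$), any vector is uniquely determined by its inner products against this basis, and we conclude $\bn_1 = \frac{1}{J_F} \bt_2 \times \bt_3$. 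The remaining two identities $\bn_2 = \frac{1}{J_F} \bt_3 \times \bt_1$ and $\bn_3 = \frac{1}{J_F} \bt_1 \times \bt_2$ follow by cyclic permutation of the indices.

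For the second column of identities I would swap the roles of the two bases. The contravariant vectors $\{\bn_1,\bn_2,\bn_3\}$ also form a basis of $\R^3$, and the determinant of the matrix $N = DF^{-\top}$ equals $\det(DF^{-1}) = 1/J_F$. Hence $(\bn_2 \times \bn_3) \cdot \bn_1 = \det(N) = 1/J_F$, while $\bn_2 \times \bn_3$ is orthogonal to $\bn_2$ and $\bn_3$. Comparing with the duality relations $\bt_1 \cdot \bn_1 = 1$, $\bt_1 \cdot \bn_2 = 0$, $\bt_1 \cdot \bn_3 = 0$, I see that $\bt_1$ and $J_F\,(\bn_2 \times \bn_3)$ have identical inner products against the basis $\{\bn_1, \bn_2, \bn_3\}$, giving $\bn_2 \times \bn_3 = \frac{1}{J_F} \bt_1$. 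Cyclic permutation yields the remaining two formulas.

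I do not anticipate any real obstacle: the only delicate point is to be consistent with the cyclic orientation $(1,2,3) \to (2,3,1) \to (3,1,2)$ so that the signs in the scalar triple products come out positive, and to remember that $\det(N) = 1/J_F$ rather than $J_F$ when switching from the covariant to the contravariant side.
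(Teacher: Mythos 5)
The paper states Proposition~\ref{prop:bv} without any proof, so there is no in-text argument to compare against; judged on its own, your proof is correct and complete. The duality $\bn_i \cdot \bt_j = \delta_{ij}$ indeed follows from $N^\top DF = DF^{-1}DF = \Id$, the triple products carry the right sign because cyclic column permutations are even (strictly, $(\bt_2\times\bt_3)\cdot\bt_1 = \det(\bt_2|\bt_3|\bt_1) = \det(DF) = J_F$, the point you flagged yourself), and your identification step is sound: the Euclidean inner product is nondegenerate and $\{\bt_1,\bt_2,\bt_3\}$ spans $\R^3$, so equal inner products against that basis force equality of vectors. The same reasoning with $\det N = \det(DF^{-\top}) = 1/J_F$ settles the second column, and nothing in your argument assumes $J_F>0$, which matters since the proposition is stated with $J_F$ rather than $|J_F|$. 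Two remarks for economy. First, your first column is precisely the adjugate (Cramer) formula for the inverse, $N = DF^{-\top} = \frac{1}{J_F}\,(\bt_2\times\bt_3\,|\,\bt_3\times\bt_1\,|\,\bt_1\times\bt_2)$, so one could cite that identity directly instead of rebuilding it from duality. Second, the second column also follows from the first without a separate duality argument, via the identity $(\ab\times\bb)\times(\bb\times\mathbf{c}) = \bigl((\ab\times\bb)\cdot\mathbf{c}\bigr)\,\bb$: for instance $\bn_2\times\bn_3 = \frac{1}{J_F^2}(\bt_3\times\bt_1)\times(\bt_1\times\bt_2) = \frac{1}{J_F^2}\bigl((\bt_3\times\bt_1)\cdot\bt_2\bigr)\bt_1 = \frac{1}{J_F}\,\bt_1$. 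Your symmetric treatment of the two bases is equally clean, and arguably more transparent about where $\det N = 1/J_F$ enters.
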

Note that $\bn$ without an index refers to the unit outer normal vector while $\bn_{1/2/3}$ refers to the columns of $N$.
\subsection{Transformation of Differential Forms}
We introduce curvilinear coordinates to the differential forms and show how they are transformed in a consistent way as can be seen in \cite{kreeft2011mimetic}.

\begin{mydef}
For a scalar differential 0-form, $ g\in H^1(\Omega)$, we define $\tilde{g}\in H^1(\tom)$ as
\begin{align}\label{0form}
\tilde g(\xib):= g(F(\xib))=g(\xb).
\end{align}
\end{mydef}
Next, we consider the transformation of the other differential forms.
\begin{prop} \label{prop:derivtrafo} We have the following properties:
\begin{enumerate}
\item A vector function, $\Eb\in H(\curl,\Omega)$, corresponding to a differential 1-form, is transformed by the covariant Piola transform,
\begin{align}\label{PiolaE}
\Eb(\xb)=N(\xib) \tEb(\xib)\text{ with } \tEb \in H(\curl,\tom).
\end{align}
\item A vector function,  $\Bb\in H(\div,\Omega)$, corresponding to a differential 2-form, is transformed by the contravariant Piola transform,
\begin{align}\label{PiolaB}
\Bb(\xb)=\frac{DF(\xib)}{J_F(\xib)}\tBb(\xib) \text{ with } \tBb \in H(\div,\tom).
\end{align}
\item A scalar differential 3-form, $h \in L^2(\Omega)$, is related to $\tilde h \in L^2(\tom)$ via
\begin{align*}
h(\xb)=\frac{1}{J_F(\xib)}\tilde{h}(\xib).
\end{align*}  
\end{enumerate}
\end{prop}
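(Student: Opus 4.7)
The plan is to derive each transformation rule from the invariance of the natural integral pairing associated with the underlying $k$-form, which is the cleanest way to see why the Piola transforms arise. The 0-form case is immediate from the definition of composition, so I would start with the 3-form, which is essentially a change of variables: requiring that $\int_\Omega h(\xb) \du\xb = \int_{\tom} \tilde h(\xib) \du\xib$ and substituting $\du\xb = J_F(\xib)\du\xib$ (assuming $J_F > 0$ after possibly orienting the map) gives $\tilde h = J_F\, h$, i.e.\ $h = \tilde h / J_F$.

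Next, for the 1-form case I would exploit that line integrals of $\Eb$ must transform invariantly under $F$. Along any curve $\gamma\subset\Omega$ with preimage $\tilde\gamma = F^{-1}(\gamma)$, the chain rule gives $\du\xb = DF(\xib)\du\xib$, so
\begin{align*}
\int_\gamma \Eb(\xb)\cdot\du\xb
=\int_{\tilde\gamma} \Eb(F(\xib))\cdot DF(\xib)\du\xib
=\int_{\tilde\gamma} \bigl(DF(\xib)^{\top}\Eb(F(\xib))\bigr)\cdot\du\xib.
\end{align*}
Identifying the right side with $\int_{\tilde\gamma}\tEb\cdot\du\xib$ forces $\tEb = DF^{\top}\Eb$, hence $\Eb = DF^{-\top}\tEb = N\,\tEb$, which is \eqref{PiolaE}. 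The 2-form case proceeds analogously using flux integrals: the oriented surface element transforms via the cofactor matrix, $\du\boldsymbol{\sigma} = J_F(\xib)DF(\xib)^{-\top}\du\tilde{\boldsymbol{\sigma}}$ (this is precisely the content of Proposition~\ref{prop:bv}, since the columns of $J_F N$ are $\bt_j\times\bt_k$). Invariance of $\int_S \Bb\cdot\du\boldsymbol{\sigma}$ under $\xb = F(\xib)$ then yields $\tBb = J_F\, DF^{-1}\Bb$, i.e.\ $\Bb = \frac{DF}{J_F}\tBb$, as claimed in \eqref{PiolaB}.

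The remaining obligation is to verify that the transformed fields lie in the correct Sobolev spaces on $\tom$. This follows from the classical fact that pullback commutes with the exterior derivative, which at the vector-proxy level says that $\curl_{\xi}(N^{\top}\Eb\circ F) = \frac{DF}{J_F}\bigl((\curl_{\xb}\Eb)\circ F\bigr)$ and an analogous identity for the divergence. Since $\Eb \in H(\curl,\Omega)$ implies $\curl_{\xb}\Eb \in L^2(\Omega)^3$, the contravariant Piola on the right-hand side is in $L^2(\tom)^3$ (using that $DF$, $J_F^{-1}$ and the change-of-variables Jacobian are bounded on the compact $\tom$), and similarly for $\Bb$. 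The main obstacle is the bookkeeping for this step: getting the Jacobian factors in the pullback-of-derivative identities to match up precisely with the covariant and contravariant Piola transforms of the derivative quantities, but once this is organised using Proposition~\ref{prop:bv} the result is a direct consequence.
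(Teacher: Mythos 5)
Your derivation cannot be compared to an in-paper argument, because the paper does not actually prove this proposition: it states the transformation rules and delegates their justification to the mimetic-framework reference (Kreeft, Palha \& Gerritsma). Your route---deriving each Piola transform from invariance of the natural integral pairing (volume integrals for 3-forms, line integrals for 1-forms, flux integrals for 2-forms)---is the standard self-contained argument, and the core computations are correct: $DF^\top$ appearing from $\du\xb = DF\,\du\xib$ along curves gives \eqref{PiolaE}, and the cofactor transformation of the oriented surface element, which you correctly tie to Proposition \ref{prop:bv} via the columns of $J_F N$ being $\bt_j\times\bt_k$, gives \eqref{PiolaB}. This buys something the paper's citation does not: an explicit reason \emph{why} these and not other weightings are forced, which is what later makes the boundary terms in Proposition \ref{prop:transformedVM} transform cleanly.

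One step as written is wrong, though the idea behind it is right. Your commutation identity
\begin{align*}
\curl_{\xib}\bigl(N^{\top}(\Eb\circ F)\bigr)=\frac{DF}{J_F}\bigl((\curl_{\xb}\Eb)\circ F\bigr)
\end{align*}
is false: since $N=DF^{-\top}$, the argument $N^\top(\Eb\circ F)=DF^{-1}(\Eb\circ F)$ is not the covariant pullback $\tEb=DF^{\top}(\Eb\circ F)$ you established two paragraphs earlier, and the factor $\frac{DF}{J_F}$ on the right is the logical-to-physical map, i.e.\ the \emph{inverse} of the needed physical-to-logical contravariant pullback. A quick check with the scaling map $F(\xib)=a\xib$ shows the two sides differ by a factor $a^2$. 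The correct statements are
\begin{align*}
\curl_{\xib}\bigl(DF^{\top}(\Eb\circ F)\bigr)=J_F\,DF^{-1}\bigl((\curl_{\xb}\Eb)\circ F\bigr),
\qquad
\nabla_{\xib}\cdot\bigl(J_F\,DF^{-1}(\Bb\circ F)\bigr)=J_F\,(\nabla_{\xb}\cdot\Bb)\circ F,
\end{align*}
i.e.\ the curl of the covariant pullback is the contravariant pullback of the curl, and the divergence of the contravariant pullback is the 3-form pullback of the divergence---exactly the ``pullback commutes with $\mathrm{d}$'' principle you invoke. With this correction your regularity argument ($\tEb\in H(\curl,\tom)$, $\tBb\in H(\div,\tom)$, using boundedness of $DF$ and $J_F^{-1}$ on the compact $\tom$, which the paper's standing assumptions of a smooth bijective map with nonvanishing Jacobian supply) goes through, and the proof is complete.
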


Next, let us formulate the Vlasov--Maxwell system under a coordinate transformation with two periodic (without loss of generality $\xi_2$ and $\xi_3$) and one radial direction ($\xi_1$). We note that the normal to the boundary vectors in logical coordinates are then given by the (negative) unit vector along the first direction.

\begin{prop}\label{prop:transformedVM}
Under the coordinate transformation $F(\xib)=\xb$ with periodic boundaries along $\xi_2$, $\xi_3$
\begin{enumerate}
\item the Vlasov equation \eqref{V} transforms to
{\small
\begin{align*}
\frac{\partial \tf_s(\xib,\vb,t)}{\partial t}+& N(\xib)^\top \vb  \cdot \nabla_{\xib} \tf_s(\xib,\vb,t)
\\+&\frac{q_s}{m_s} N(\xib) \left(\tEb(\xib,t)+( N(\xib)^\top\vb)\times \tBb(\xib,t)\right)\cdot \nabla_\vb \tf_s(\xib,\vb,t)=0;
\end{align*} }
\item Faraday's \eqref{Faraday} and magnetic Gauss' laws \eqref{mGauss} in strong form do not change, i.e.~
\begin{subequations}
\begin{align}\label{strongFaraday}
 \frac{\partial \tBb(\xib,t)}{\partial t}&=-  \nabla_{\xib} \times \tEb(\xib,t), 
 \\ \label{strongmGauss}
 \nabla_\xib \cdot \tBb(\xib,t) &=0; 
\end{align}
\end{subequations}
\item the weak formulation of Amp\`ere's \eqref{weakformula1} and Gauss' laws \eqref{weakformula2} is transformed for all $\tilde{\boldsymbol{\varphi}}\in H(\operatorname{curl},\tom),$ $\tilde \psi \in H^1(\tom)$ as
\begin{subequations}
{\small
\begin{align}\label{weakAmpere}
\frac{\partial}{\partial t}\int_{\tom}  N \tilde{\boldsymbol{\varphi}} \cdot N \tEb |J_F| \du\xib =& \int_{\tom}   \frac{DF}{J_F} \nabla_\xib \times \tilde{\boldsymbol{\varphi}} \cdot \frac{DF}{J_F} \tBb |J_F| \du\xib \\ \notag
+&\int_{\partial \tilde \Omega} \left(-\tvp_2 \frac{\bt_3}{J_F} +\tvp_3 \frac{\bt_2}{J_F}\right) \cdot DF \tBb \du\tilde{\sigma} \\ \notag -&\int_{\tom} N \tilde{\boldsymbol{\varphi}} \cdot N \tJb |J_F| \du \xib,
\\\label{weakGauss}
\int_{\tom} N \nabla_\xib \tilde \psi \cdot N \tEb |J_F| \du\xib =&  \int_{\partial\tilde \Omega} \tilde \psi  (N \tEb \cdot \bn_1)   J_F \du\tilde{\sigma}-\int_{\tom} \tilde{ \psi} \trho |J_F| \du\xib.  
\end{align} }
\end{subequations}
\end{enumerate}
\end{prop}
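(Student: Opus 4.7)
The plan is to verify each of the three statements by direct change of variables, relying on the biorthogonality $\bn_i\cdot\bt_j=\delta_{ij}$ and the cross-product identities of Proposition \ref{prop:bv}; throughout I assume the coordinate map $F$ is time-independent, so $\partial_t$ commutes with all Piola transforms. For (i), the chain rule applied to $\tf_s(\xib,\vb,t)=f_s(F(\xib),\vb,t)$ gives $\nabla_\xib \tf_s = DF^\top \nabla_\xb f_s$, so the spatial advection $\vb\cdot \nabla_\xb f_s$ becomes $(N^\top\vb)\cdot \nabla_\xib \tf_s$, while $\nabla_\vb \tf_s=\nabla_\vb f_s$ is unchanged. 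The only delicate step is rewriting the Lorentz force in logical variables: using $\vb=DF(N^\top\vb)$ (a direct consequence of biorthogonality) together with the identity $DF\ab \times DF\bb = J_F\, N(\ab\times \bb)$ applied to $\ab=N^\top\vb$ and $\bb=\tBb$, the term $\vb\times\Bb$ collapses to $N\big((N^\top\vb)\times \tBb\big)$, and combined with $\Eb=N\tEb$ this yields the stated equation.

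For (ii), I would invoke the standard commutation of the Piola transforms with the exterior derivative, namely $\nabla_\xb\times (N\tEb)=\frac{DF}{J_F}(\nabla_\xib\times \tEb)$ and $\nabla_\xb\cdot\bigl(\frac{DF}{J_F}\tBb\bigr)=\frac{1}{J_F}(\nabla_\xib\cdot \tBb)$. Substituting these into Faraday's law and magnetic Gauss' law and cancelling the common left factors $DF/J_F$ and $1/J_F$ respectively yields \eqref{strongFaraday} and \eqref{strongmGauss}. Since both identities are pointwise, no boundary contribution arises.

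For (iii), I would perform the change of variables in every integral. The volume integrals transform immediately after inserting the Piola representations of $\boldsymbol{\varphi}$, $\tEb$, $\tBb$, $\tJb$ (and $\nabla_\xb\psi = N\nabla_\xib \tilde\psi$ for Gauss), multiplying by $|J_F|\,\du\xib$, and using the curl commutation relation for the term $\nabla_\xb\times \boldsymbol{\varphi}\cdot\Bb$. Only the radial boundary contributes since $\xi_2,\xi_3$ are periodic, and there the oriented surface element satisfies $\bn\,\du\sigma = J_F\,\bn_1\,\du\tilde\sigma$ up to orientation. For Gauss' law the boundary integrand collapses at once to $\tilde\psi\,(N\tEb\cdot\bn_1)\,J_F$. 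For Amp\`ere's law I would rewrite $(\Bb\times\boldsymbol{\varphi})\cdot\bn$ via the cyclic triple-product identity, expand $N\tilde{\boldsymbol{\varphi}}=\tvp_1\bn_1+\tvp_2\bn_2+\tvp_3\bn_3$, and invoke $\bn_2\times\bn_1=-\bt_3/J_F$ and $\bn_3\times\bn_1=\bt_2/J_F$ from Proposition \ref{prop:bv} to recover the combination $-\tvp_2 \bt_3/J_F + \tvp_3 \bt_2/J_F$ contracted with $DF\tBb$.

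The main obstacle is precisely this Amp\`ere boundary term: tracking the orientation on the two radial faces and converting cross products of contravariant vectors back into covariant basis vectors requires the careful bookkeeping outlined above. Once this is handled, the remainder is a systematic application of the Piola formulas, the chain rule, and the biorthogonality of the two bases.
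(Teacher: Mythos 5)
Your proposal is correct and takes essentially the same route as the paper: for the boundary contributions---the only part the paper proves itself, citing \cite{perse2021geometric} for parts (i), (ii) and the volume terms---you use exactly the paper's steps, namely the surface element $\du\sigma=\|\bt_2\times\bt_3\|\du\tilde{\sigma}$ with normal $\pm\bn_1/\|\bn_1\|$, the cyclic permutation of the scalar triple product, the expansion of $N\tvpb$ in the contravariant basis, and the identities $\bn_2\times\bn_1=-\bt_3/J_F$, $\bn_3\times\bn_1=\bt_2/J_F$ from Proposition \ref{prop:bv}. The only difference is that you additionally prove directly (via the chain rule, $\vb=DF(N^\top\vb)$, the cross-product identity $DF\ab\times DF\bb=J_F\,N(\ab\times\bb)$, and the Piola commutation relations) what the paper delegates to the earlier reference, which is sound.
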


\begin{proof}
The proof for vanishing boundary conditions can be found in \cite[Proposition 2.5]{perse2021geometric}. Therefore, we only focus on the boundary parts in the weak formulation of Amp\`ere's and Gauss' laws. 

Since the normal vector in logical coordinates is given as $\tilde \bn= \pm (1,0,0 )^\top = \pm \hat{\textbf{e}}_x$ and the normal vector transforms with the covariant Piola transform \eqref{PiolaE}, it takes the following form  in physical coordinates: 
\begin{align*}
\bn = \frac{N(\xib) \tilde{\bn} }{\| N(\xib) \tilde{\bn}\|} = \pm \frac{\bn_1}{\|\bn_1\|}.
\end{align*}  

To compute the boundary part of Amp\`ere's law tested with $\boldsymbol{\varphi}$ in curvilinear coordinates, we insert the Piola transforms for the electromagnetic fields and the test function, \eqref{PiolaE} and \eqref{PiolaB}, and use $\du\sigma=\|\bt_2 \times \bt_3\| \du\tilde{\sigma}$ with $\du \tilde{\sigma}=\big|_{\xi_1=0}^{1} \du \xi_2 \du \xi_3$,   
\begin{align*}
\int_{\partial \tilde \Omega}  \left(\frac{DF}{J_F} \tBb \times N \tvpb \right) \cdot N \tilde{\bn} \frac{\| \bt_2 \times \bt_3\|}{\| N \tilde{\bn}\|} \du\tilde{\sigma}.
\end{align*}
Since the scalar triple product permutes,  it holds that $$\left(\frac{DF}{J_F} \tBb \times N \tvpb \right) \cdot N \tilde{\bn} = 
  \left(  N \tvpb \times  N \tilde{\bn} \right) \cdot \frac{DF}{J_F} \tBb.$$ 
Moreover, we insert the notation from Definition \ref{def:derivativevectors}
for the columns of the inverse transposed Jacobian matrix to represent the vector operations as
\begin{align*}
\int_{\partial \tilde \Omega} \left((
\bn_1 \tvp_1+ \bn_2 \tvp_2+  \bn_3 \tvp_3)
 \times \bn_1\right)\cdot DF \tBb \ \frac{\|\bt_2 \times \bt_3\|}{ J_F \|\bn_1\| }\du\tilde{\sigma}.
\end{align*}
Then, we make use of the vector identities from Proposition \ref{prop:bv} to obtain
\begin{align}\label{boundarypartampere}
\int_{\partial \tilde \Omega} \left(-\tvp_2 \frac{\bt_3}{J_F} +\tvp_3 \frac{\bt_2}{J_F}\right) \cdot DF \tBb \du\tilde{\sigma}.
\end{align}

Using the same formulas for the boundary part of Gauss' law gives us
\begin{align}\label{boundarypartgauss}
\int_{\partial\tilde \Omega} \tilde \psi  \left(N \tEb \cdot \frac{\bn_1}{\|\bn_1\|}\right)  \| \bt_2 \times \bt_3\| \du\tilde{\sigma}
=\int_{\partial\tilde \Omega} \tilde \psi  \left(N \tEb \right)\cdot \bn_1   |J_F| \du\tilde{\sigma}.
\end{align}

\end{proof}

\section{Spatial Semi-discretisation}
\label{sec:spatial_discretiastion}

In this section, we devise a spatial semi-discretisation based on conforming spline finite elements. For this purpose, we extend the curvilinear description proposed in \cite{perse2021geometric} for periodic boundary conditions to a toroidal coordinate system.

\subsection{Conforming Spline Finite Elements with Boundary Conditions}\label{sec:boundarysplines}
In this subsection, we review the construction of spline basis functions with real boundary conditions.
First, we review general properties of the basis splines given in \cite{de1972calculating}, which we will use further on.

Let us start with the knot vector $T=\{t_j\}_{1-p\leq j\leq N+p+1}$, which is a non-decreasing sequence of points. In our case, we have chosen the equidistant grid points $\xi_j$ of our $N_x$ cells for the knot sequence so that $\xi_{j+1}-\xi_j=\Delta \xi$.

From the knot sequence the $N$ splines of degree $p$ are defined according to the following formula:
\begin{mydef}
The $j$-th basis spline is computed via the recursion formula
\begin{align}\label{splinerecursion}
S^p_j(\xi)= \frac{\xi-t_j}{t_{j+p}-t_j} S^{p-1}_j(\xi) + \frac{t_{j+p+1} - \xi }{t_{j+p+1}-t_{j+1}} S^{p-1}_{j+1}(x),
\end{align}
where the spline of degree zero is defined as $S^0_j(x) = \chi_{[t_j,t_{j+1}]}$.
Furthermore, the derivative of the $j$-th spline is calculated as
\begin{align}\label{splinederivative}
\frac{d S_j^p(\xi)}{\du\xi}=p\left(\frac{S_j^{p-1}(\xi)}{t_{j+p}-t_j}-\frac{S_{j+1}^{p-1}(\xi)}{t_{j+p+1}-t_{j+1}}\right).
\end{align} 
\end{mydef}
In \cite{perse2021geometric}, we have worked with basis splines that are defined on a periodic knot sequence, which has the following form:
\begin{align*}
T=\{\xi_{N-p+1},...,\xi_N,\xi_1,\xi_2,...,\xi_{N-1},\xi_N,\xi_1,...,\xi_{p+1} \}.
\end{align*}
Here, the spline values are collected in a vector as $\Sb^p(\xi)=(S^p_{1}(\xi),..., S^p_N(\xi))$.
 
Now, for non-periodic boundary conditions, we consider clamped splines. In this case, the outer grid points are repeated $p$ times so that they have multiplicity $p+1$. Note that without assuming periodicity  we have $N+1$ grid points for $N$ cells. Then, the knot sequence is given by
\begin{align*}
T=\{\xi_1,...,\xi_1,\xi_2,...,\xi_N,\xi_{N+1},...,\xi_{N+1}\}.
\end{align*}
From this knot sequence the $N+p$ splines of degree $p$ are defined again via the recursion formula \eqref{splinerecursion}, where for dimensionality reasons we need to consider an additional first and last zero spline of degree $p-1$. 

In our convention, we denote the spline starting in the first cell as $S_1^p$. Accordingly, the first spline and last spline of degree $p$ are computed via \eqref{splinerecursion} as
\begin{align*}
S^p_{1-p}(\xi)=& \frac{t_{2} - \xi }{t_{2}-t_{2-p}} S^{p-1}_{2-p}(\xi)=  \frac{\xi_2 - \xi }{\Delta \xi} S^{p-1}_{2-p}(\xi), \\
S^p_{N}(\xi)=& \frac{\xi-t_{N}}{t_{N+p}-t_{N}} S^{p-1}_{N}(\xi)=\frac{\xi-\xi_{N}}{\Delta \xi} S^{p-1}_{N}(\xi). 
\end{align*}

Since the other splines equal zero at the boundary, we obtain
\begin{align*}
 S^p_j(0)=\begin{cases} & 1 \text{ when } j=1-p, \\ 
& 0 \text{ else,}
\end{cases}
\quad 
S^p_j(1)=\begin{cases}&1 \text{ when } j=N,\\ 
& 0 \text{ else.}
\end{cases}
 \end{align*}
 
This leads to the following evaluation of the product of two splines at the boundary:

\begin{align*}
 [S_i^{p-1} S_j^p]_0^1&=S_i^{p-1}(1) S^p_j(1) -S_i^{p-1}(0) S^p_j(0)
 \\
 &= \begin{cases} -1 & \text{ when } i=1-(p-1) \land  j=1-p, \\
1 & \text{ when } i=j=N,\\
0 & \text{ else. } \end{cases}
\end{align*}

We collect the spline values in a vector as $\Sb_\star^p(\xi)=(S^p_{1-p}(\xi),..., S^p_N(\xi))$ and write the spline derivative in matrix vector form with the help of the discrete 1D derivative matrix $D_\star$ defined via $ \frac{\du}{\du\xi} \Sb_\star^p(\xi) = \Sb_\star^{p-1}(\xi) D_\star$. The entries of the matrix are computed using the formula for the spline derivative \eqref{splinederivative},
\begin{align*}
\frac{\du S_{1-p}^p(\xi)}{\du\xi} &=-\frac{p}{\Delta \xi}S_{2-p}^{p-1}(\xi), 
\\
\frac{\du S_{j-p}^p(\xi)}{\du\xi} &=\frac{p}{\Delta \xi} \left( \frac{S_{j-p}^{p-1}(\xi)}{j-1}-\frac{S_{j+1-p}^{p-1}(\xi)}{j}\right) \text{ for } 2 \leq j \leq p,
 \\
\frac{\du S_j^p(\xi)}{\du\xi} &=\frac{1}{\Delta \xi} \left( S_j^{p-1}(\xi)-S_{j+1}^{p-1}(\xi)\right) \text{ for } 1 \leq j \leq N-p,
 \\
\frac{\du S_{N-j}^p(\xi)}{\du\xi} &=\frac{p}{\Delta \xi} \left( \frac{S_{N-j}^{p-1}(\xi)}{j+1}-\frac{S_{N-j+1}^{p-1}(\xi)}{j}\right) \text{ for } 1 \leq j \leq p-1,
\\
 \frac{\du S_{N}^p(\xi)}{\du\xi} &=\frac{p}{\Delta \xi}S_{N}^{p-1}(\xi).
\end{align*}
Then, the matrix $D_\star\in\mathbb{R}^{(N+p)\times (N+p)}$ is given by 
\small
\begin{align*}
D_\star=\frac{1}{\Delta \xi} \begin{pmatrix}
0&0&0&0&0&0&0&0&0&0\\
-\frac{p}{1}&\frac{p}{1}&0&0&0&0&0&0&0&0\\
0&\ddots&\ddots&0&0&0&0&0&0&0\\
0&0&-\frac{p}{p-1}&\frac{p}{p-1}&0&0&0&0&0&0\\
0&0&0&-1&1&0&0&0&0&0\\
\vdots&\vdots&\vdots&\ddots&\ddots&\ddots&\ddots&\vdots&\vdots&\vdots\\
0&0&0&0&0&-1&1&0&0&0\\
0&0&0&0&0&0&-\frac{p}{p-1}&\frac{p}{p-1}&0&0\\
0&0&0&0&0&0&0&\ddots&\ddots&0\\
0&0&0&0&0&0&0&0&-\frac{p}{1}&\frac{p}{1}
\end{pmatrix},
\end{align*}
\normalsize
where the first row accounts for the one spline less we have with degree $p-1$.

The 3D spline basis for differential 0-forms is constructed as a tensor product of the 1D splines, 
\begin{align}\label{boundaryspline0}
\tLa^0(\xib)=\Sb_\star^p(\xi_1) \otimes \Sb^p(\xi_2) \otimes \Sb^p(\xi_3).
\end{align}

Since we assumed non-periodic boundary conditions only in the first direction,  
we take clamped splines in the first direction and periodic splines in the other two directions.
Then, we build the 3D derivative matrices accordingly,
\begin{align}\label{boundaryderivativematrix}
\G=\begin{pmatrix}
D_1\\
D_2\\
D_3
\end{pmatrix},
\C=\begin{pmatrix}
0&-D_3&D_2\\
D_3&0&-D_1\\
-D_2&D_1&0
\end{pmatrix},
\D = \G^\top, 
\end{align}
where the block matrices $D_{1,2,3}$ are constructed as the tensor product of 1D derivative matrices via
$D_1=D_\star\otimes \mathbb{I}\otimes \mathbb{I}, D_2=\mathbb{I}\otimes D \otimes \mathbb{I}, D_3=\mathbb{I}\otimes \mathbb{I}\otimes D $. Here, $\mathbb{I}$ stands for the identity matrix and the periodic derivative matrix is computed via $\frac{\du}{\du\xi}\Sb^p(\xi)=\Sb^{p-1}(\xi) D$,
$$D=\frac{1}{\Delta \xi} \begin{pmatrix}
1&0&...&0&-1\\
-1&1&0&...&0\\
0&-1&1&0&\vdots\\
\vdots&0&\ddots&\ddots&0\\
0&...&0&-1&1
\end{pmatrix}.$$

Then, the 3D spline basis functions for the differential 1-,2- and 3-forms are defined as
\begin{equation} \label{boundarysplines}
\begin{aligned}
\tLab^1(\xib)&=\begin{pmatrix}
\tLa^{1,1}(\xib)&0&0\\
0&\tLa^{1,2}(\xib)&0\\
0&0&\tLa^{1,3}(\xib)
\end{pmatrix}, \\
\tLab^2(\xib)&=\begin{pmatrix}
\tLa^{2,1}(\xib)&0&0\\
0&\tLa^{2,2}(\xib)&0\\
0&0&\tLa^{2,3}(\xib)
\end{pmatrix}, 
\\
\tLa^3(\xib)&=\Sb_\star^{p-1}(\xi_1) \otimes \Sb^{p-1}(\xi_2) \otimes \Sb^{p-1}(\xi_3).
\end{aligned}
\end{equation}
with
{\small
\begin{align*}
\tLa^{1,1}(\xib) &= \Sb_\star^{p-1}(\xi_1) \otimes \Sb^p(\xi_2) \otimes \Sb^p(\xi_3), & \tLa^{2,1}(\xib) &= \Sb_\star^p(\xi_1) \otimes \Sb^{p-1}(\xi_2) \otimes \Sb^{p-1}(\xi_3),
\\ \tLa^{1,2}(\xib) &= \Sb_\star^p(\xi_1) \otimes \Sb^{p-1}(\xi_2) \otimes \Sb^p(\xi_3), & \tLa^{2,2}(\xib) &= \Sb_\star^{p-1}(\xi_1) \otimes \Sb^p(\xi_2) \otimes \Sb^{p-1}(\xi_3), 
\\
 \tLa^{1,3}(\xib) &= \Sb^p_\star(\xi_1) \otimes \Sb^p(\xi_2) \otimes \Sb^{p-1}(\xi_3), &  \tLa^{2,3}(\xib) &= \Sb_\star^{p-1}(\xi_1) \otimes \Sb^{p-1}(\xi_2) \otimes \Sb^p(\xi_3).
\end{align*}}
\begin{prop}\label{prop:boundaryderham}
The spline basis functions $\tLa^0,\tLab^1,\tLab^2,\tLa^3$ defined in \eqref{boundaryspline0} and \eqref{boundarysplines} form a discrete de Rham sequence with the derivative matrices $\G,\C,\D$  \eqref{boundaryderivativematrix}, i.e., it holds that $\C\G = 0$, $\D\C = 0$ and
\begin{equation}
\begin{aligned}\label{boundaryderham}
\nabla_\xib \tLa^0 &= \tLab^1 \G, \\
\nabla_\xib \times \tLab^1 &= \tLab^2 \C, \\
\nabla_\xib \cdot \tLab^2 &= \tLa^3 \D.
\end{aligned}
\end{equation}
\end{prop}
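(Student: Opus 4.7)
The plan is to verify each identity by direct calculation, exploiting the tensor product structure of the basis and the two fundamental one-dimensional derivative identities that hold by construction:
\[
\frac{\du}{\du \xi}\Sb_\star^p(\xi) = \Sb_\star^{p-1}(\xi)\, D_\star, \qquad \frac{\du}{\du \xi}\Sb^p(\xi) = \Sb^{p-1}(\xi)\, D.
\]
The first holds by the way $D_\star$ was assembled (matching the four cases in the displayed formulas for $\du S_j^p/\du \xi$ at the clamped ends and in the interior), and the second is already established in \cite{perse2021geometric}.

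First, I would prove the three differential identities. For the gradient, apply $\partial/\partial \xi_i$ to $\tLa^0 = \Sb_\star^p(\xi_1) \otimes \Sb^p(\xi_2) \otimes \Sb^p(\xi_3)$: because the derivative only hits one tensor factor, the chain $D_\star \otimes \mathbb{I}\otimes \mathbb{I}$, $\mathbb{I}\otimes D \otimes \mathbb{I}$ and $\mathbb{I}\otimes \mathbb{I}\otimes D$ appear on the right, so stacking them yields exactly $\tLab^1 \G$ by definition of $\tLab^1$ in \eqref{boundarysplines} and of $D_1,D_2,D_3$. For the curl $\nabla_\xib \times \tLab^1 = \tLab^2 \C$ and the divergence $\nabla_\xib \cdot \tLab^2 = \tLa^3 \D$ the argument is the same, but one must carefully track which tensor factor carries a $p$ and which carries a $p-1$. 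For example, the first component of $\nabla_\xib \times \tLab^1$ gives $\partial_2 \tLa^{1,3} - \partial_3 \tLa^{1,2}$; since $\tLa^{1,3}$ has $\Sb^p(\xi_2)$ in the second slot and $\tLa^{1,2}$ has $\Sb^p(\xi_3)$ in the third slot, differentiation produces exactly the tensor $\Sb_\star^p(\xi_1)\otimes\Sb^{p-1}(\xi_2)\otimes\Sb^{p-1}(\xi_3) = \tLa^{2,1}$ times the right combination of $D_2,D_3$, which matches the first block row of $\tLab^2 \C$.

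Next, I would establish $\C\G = 0$ and $\D\C = 0$. The crucial observation is that the block matrices $D_1, D_2, D_3$ act on distinct tensor-product factors (each is the tensor of a 1D derivative matrix with two identity matrices), so they pairwise commute: $D_i D_j = D_j D_i$ for all $i,j$. A direct block multiplication of $\C\G$ then gives entries like $-D_3 D_2 + D_2 D_3 = 0$, and similarly $\D\C$ produces three terms of the form $D_i D_j - D_j D_i = 0$. No boundary issue enters this algebraic step, since commutativity is a property of the tensor factorisation alone.

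The only real subtlety is checking that $D_\star$ as written really satisfies $\frac{\du}{\du\xi}\Sb_\star^p = \Sb_\star^{p-1} D_\star$: the first row of $D_\star$ is zero because the degree-$(p-1)$ clamped basis $\Sb_\star^{p-1}$ has one fewer element than $\Sb_\star^p$, and the displayed piecewise expressions for $\du S_{1-p}^p/\du\xi, \du S_{j-p}^p/\du\xi, \du S_j^p/\du\xi$ and their mirror images near $\xi_{N+1}$ must match the sub- and super-diagonal entries exactly. I would verify this by reading off the coefficients directly from \eqref{splinederivative}, noting that the repeated knots at the ends make several denominators $t_{j+p}-t_j$ collapse to $(j-1)\Delta\xi$ or $j\Delta\xi$, producing the $p/(j-1)$ and $p/j$ factors shown in $D_\star$. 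Once the 1D identity is in place, the rest of the proof is routine tensor algebra; the main obstacle is simply bookkeeping the many index cases, not any deep analytical point.
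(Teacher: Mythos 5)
Your proposal is correct and takes essentially the same route as the paper: the three differential identities hold by construction of the derivative matrices (the paper establishes the 1D identity $\frac{\du}{\du\xi}\Sb_\star^p(\xi) = \Sb_\star^{p-1}(\xi)\,D_\star$ in the text preceding the proposition, exactly the bookkeeping you carry out), and $\C\G = 0$, $\D\C = 0$ follow from block multiplication together with the pairwise commutativity $D_iD_j = D_jD_i$ coming from the Kronecker product structure. The only difference is one of presentation: you fold the verification of $D_\star$ into the proof, whereas the paper treats it as part of the definition and states the identities hold ``by construction.''
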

\begin{proof}
Since we defined the derivative matrices to reproduce the partial derivatives of the splines on the level of the degrees of freedom, \eqref{boundaryderham} holds by construction. So, it is left to check that $\C \G=0$ and $\D \C = 0$. When computing $\C \G$ block-wise, we get
\begin{align*}
\C \G = \begin{pmatrix}
-D_3 D_2 + D_2 D_3 \\
D_3 D_1 - D_1 D_3\\
-D_2 D_1 + D_1 D_2
\end{pmatrix}.
\end{align*}
Then, the Kronecker product structure of the derivative matrices guarantees that the matrices commute because the identity matrix commutes with every matrix, e.g.~
{\small
\begin{align*}
D_3 D_1 =(\mathbb{I}\otimes \mathbb{I}\otimes D) (D_\star\otimes \mathbb{I}\otimes \mathbb{I}) = D_\star \otimes \mathbb{I}\otimes D= (D_\star\otimes \mathbb{I}\otimes \mathbb{I}) (\mathbb{I}\otimes \mathbb{I}\otimes D)= D_1 D_3.
\end{align*}}
Analogously, we can verify that $\D \C=0$.  
\end{proof}

\subsection{Compatible FEM Description of the Field Equations}

To discretise Maxwell's equations based on the compatible finite element spaces, we represent the electromagnetic fields with a finite number of degrees of freedom, $\teb\in \R^{ 3N_1 \times 1}, \tbb\in \R^{ 3 N_2 \times 1}$, as 
\begin{subequations}\label{fieldsplinerep}
\begin{align}\label{ef}
&\tEb_h(\xib,t)=\tLaB^1(\xib) \teb(t),
\\\label{bf} 
&\tBb_h(\xib,t)=\tLaB^2(\xib) \tbb(t).
\end{align}
\end{subequations}
The fields in physical space can be expressed in the following way based on the Piola transforms \eqref{PiolaE} and \eqref{PiolaB},
\begin{align*} 
\Eb_h(\xb,t)&= N(\xib) \tEb_h(\xib,t)=  N(\xib) \tLaB^1(\xib) \teb(t),
\\
\Bb_h(\xb,t)&= \frac{DF(\xib)}{J_F(\xib)} \tBb_h(\xib,t)=\frac{DF(\xib)}{J_F(\xib)} \tLaB^2(\xib) \tbb(t).
\end{align*}

Furthermore, the mass matrices for the differential forms are defined as
\begin{equation}
\begin{aligned} \label{mass}
(\tM_0)_{ij}&= \int_{{\tom}} \tLa^0_i(\xib) \tLa^0_j(\xib) |J_F(\xib)| \ \du\xib \text{ for } 1 \leq i,j \leq N_0, 
  \\
 (\tM_1)_{IJ}&= \int_{{\tom}} \tLab^1_I(\xib)^\top G_m^{-1}(\xib) \tLab^1_{J}(\xib) |J_F(\xib)| \ \du\xib \text{ for } 1 \leq I,J \leq 3 N_1, 
  \\
  (\tM_2)_{IJ}& = \int_{{\tom}} \tLab^2_I(\xib)^\top G_m(\xib) \tLab^2_J(\xib) \frac{1}{|J_F(\xib)|} \ \du\xib
 \text{ for } 1 \leq I,J \leq 3N_2,
 \\
(\tM_3)_{ij}&= \int_{{\tom}} \tLa^3_i(\xib)  \tLa^3_j(\xib)  \frac{1}{|J_F(\xib)|} \ \du\xib \text{ for } 1 \leq i,j \leq N_3. 
\end{aligned}
\end{equation}

Next, we introduce the boundary matrices. Therefore, we insert the spline representation of the magnetic field \eqref{bf} into the boundary part of Amp\`ere's law \eqref{boundarypartampere} and test with the respective spline basis function $\tvpb=\tLab^1$  to obtain
\begin{align*}
\int_{\partial \tilde \Omega} \left(0, -\tLa^1_2 \frac{\bt_3}{J_F} ,\tLa^1_3 \frac{\bt_2}{J_F}\right) \cdot (\bt_1\tLa^2_1,\bt_2\tLa^2_2,\bt_3\tLa^2_3) \du\tilde{\sigma} \tbb.
\end{align*}
Then, we define the 1-form boundary matrix as 
{\footnotesize
\begin{align*}
\tM^1_b&= \int_0^1 \int_0^1 \left(  \frac{1}{J_F}\begin{pmatrix}
0 & 0& 0 \\
-\bt_3 \cdot \bt_1 \tLa^{1,2} \tLa^{2,1}  & -\bt_3 \cdot \bt_2 \tLa^{1,2} \tLa^{2,2}  &-\bt_3 \cdot \bt_3 \tLa^{1,2} \tLa^{2,3}  \\
\bt_2 \cdot \bt_1 \tLa^{1,3} \tLa^{2,1} & \bt_2 \cdot \bt_2 \tLa^{1,3} \tLa^{2,2} & \bt_2 \cdot \bt_3 \tLa^{1,3} \tLa^{2,3}
\end{pmatrix} \right)\bigg|_{\xi_1=0}^1  \du\xi_2 \du\xi_3.
\end{align*}}

Analogously, we use the spline representation of the electric field, \eqref{ef}, to rewrite the boundary part of Gauss' law \eqref{boundarypartgauss} and test with the respective spline basis function $\tilde \psi=\tLa^0$,
\begin{align*}
\int_{\partial\tilde \Omega} \tLa^0  \left((\bn_1 \tLa^1_1, \bn_2\tLa^1_2,  \bn_3\tLa^1_3)\cdot \bn_1\right)  |J_F| \du\tilde{\sigma} \teb.
\end{align*}
Then, the 0-form boundary matrix is defined as
\begin{align*}
\tM_b^0 =\int_0^1\int_0^1 \left( \tLa^0 \left(\bn_1 \cdot \bn_1 \tLa^1_1,\bn_1 \cdot \bn_2 \tLa^1_2, \bn_1 \cdot \bn_3 \tLa^1_3\right)  |J_F| \right)\bigg|_{\xi_1=0}^1 \du\xi_2\du\xi_3.
\end{align*}

The transformed discrete versions of Maxwell's equations from Proposition \ref{prop:transformedVM} take the following form in matrix notation:
\begin{subequations}
\begin{align}\label{discreteAmpere}
\tM_1 \dot{\teb}&= \C^\top \tM_2 \tbb + \tM^1_b \tbb -  \tjb,
\\ \label{discreteFaraday}
\dot{\tbb}&= -\C \teb,
\\ \label{discreteGauss}
\G^\top\tM_1 \teb&= \tM_b^0 \teb - \trhob ,
\\ \notag
\D \tbb&=0,
\end{align}
\end{subequations}
where the discrete current and charge can be written as $\tjb=\WM_q  \tLaBB^1(\Xib)^\top \NN^\top(\Xib)\Vb$, $\trhob=\WM_q \tLaBB^0(\Xib)^\top \mathbbm{1}_{N_p}$ with the particle weight matrices $\WM_q:=\operatorname{diag}(\omega_p q_p)\otimes \mathbb{I}_3,$  $\WM_m:=\operatorname{diag}(\omega_p m_p)\otimes \mathbb{I}_3$.

For this system, the energy balance is described by the following theorem:
\begin{theorem}
The energy balance of the semi-discrete field energy, $\tilde{\mathcal{H}}_{EB}$, is given by
\begin{align} \label{fieldenergy}
\frac{\du \tilde{\mathcal{H}}_{EB}}{\du t} = \teb^\top  \tM^1_b \tbb - \teb^\top \tjb. 
\end{align}
\end{theorem}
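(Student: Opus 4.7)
The plan is to start from the natural definition of the semi-discrete field energy as the quadratic form
\begin{align*}
\tilde{\mathcal{H}}_{EB} = \tfrac{1}{2}\teb^\top \tM_1 \teb + \tfrac{1}{2}\tbb^\top \tM_2 \tbb,
\end{align*}
which is the discrete analogue of $\tfrac{1}{2}\int|\Eb|^2+\tfrac{1}{2}\int|\Bb|^2$ after using the Piola transforms that produced the weighted mass matrices in \eqref{mass}. Since $\tM_1$ and $\tM_2$ are constant in time and symmetric, differentiating gives $\frac{\du \tilde{\mathcal{H}}_{EB}}{\du t} = \teb^\top \tM_1 \dot{\teb} + \tbb^\top \tM_2 \dot{\tbb}$.

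Next I would substitute the semi-discrete evolution equations. Discrete Amp\`ere's law \eqref{discreteAmpere} gives $\tM_1 \dot{\teb} = \C^\top \tM_2 \tbb + \tM^1_b \tbb - \tjb$, so
\begin{align*}
\teb^\top \tM_1 \dot{\teb} = \teb^\top \C^\top \tM_2 \tbb + \teb^\top \tM^1_b \tbb - \teb^\top \tjb.
\end{align*}
Discrete Faraday's law \eqref{discreteFaraday} gives $\dot{\tbb} = -\C \teb$, and using that $\tM_2$ is symmetric,
\begin{align*}
\tbb^\top \tM_2 \dot{\tbb} = -\tbb^\top \tM_2 \C \teb = -\teb^\top \C^\top \tM_2 \tbb.
\end{align*}
Adding the two contributions, the $\teb^\top \C^\top \tM_2 \tbb$ terms cancel exactly, which leaves precisely the claimed identity.

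There is no real obstacle here: the result is an algebraic identity that relies only on (i) the symmetry of the mass matrices, (ii) the fact that Faraday's law \eqref{discreteFaraday} uses exactly the transpose of the curl operator that appears in Amp\`ere's law \eqref{discreteAmpere}, and (iii) the boundary matrix $\tM^1_b$ appearing in \eqref{discreteAmpere} has no counterpart in \eqref{discreteFaraday} because Faraday's law is enforced strongly and therefore generates no boundary contribution. The only mild subtlety worth flagging is the implicit choice of the energy functional; I would state the definition of $\tilde{\mathcal{H}}_{EB}$ at the outset so that the boundary term $\teb^\top \tM^1_b \tbb$ is immediately visible as the discrete Poynting flux and $-\teb^\top \tjb$ as the usual Joule work, mirroring the continuous identity obtained by testing \eqref{weakformula1} with $\Eb$ and \eqref{Faraday} with $\Bb$.
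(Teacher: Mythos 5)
Your proposal is correct and follows essentially the same route as the paper: define $\tilde{\mathcal{H}}_{EB}= \frac{1}{2} \teb^\top \tM_1 \teb + \frac{1}{2} \tbb^\top \tM_2 \tbb$, differentiate, insert the discrete Amp\`ere and Faraday laws \eqref{discreteAmpere}--\eqref{discreteFaraday}, and cancel the $\teb^\top \C^\top \tM_2 \tbb$ terms by symmetry of $\tM_2$. Your explicit flagging of the symmetry/transpose structure and the interpretation of $\teb^\top \tM^1_b \tbb$ as the discrete Poynting flux is a welcome clarification but does not change the argument.
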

\begin{proof}
The semi-discrete field energy is defined as $\tilde{\mathcal{H}}_{EB}= \frac{1}{2} \teb^\top \tM_1 \teb + \frac{1}{2} \tbb^\top \tM_2 \tbb $
 and so the time derivative of this term gives us
\begin{align*}
\frac{\du \tilde{\mathcal{H}}_{EB}}{\du t} = \teb^\top \tM_1 \dot \teb +  \tbb^\top \tM_2 \dot \tbb.
\end{align*} 
We insert the discrete forms of Amp{\`e}re's \eqref{discreteAmpere} and Faraday's laws \eqref{discreteFaraday} to see that
\begin{align*}
\teb^\top \tM_1 \dot \teb +  \tbb^\top  \tM_2 \dot \tbb 
= & \teb^\top  \C^\top \tM_2 \tbb + \teb^\top  \tM^1_b \tbb -  \teb^\top  \WM_q  \tLaBB^1(\Xib)^\top \NN^\top(\Xib)\Vb -\tbb^\top  \tM_2 \C \teb
\\ 
= & \teb^\top  \tM^1_b \tbb - \teb^\top \tjb.
\end{align*}
\end{proof}

Here, the semi-discrete Poynting flux, which gives the field energy crossing the boundary, is computed as
\begin{align}\label{Poyntingflux}
\int_{\partial \Omega} (\Eb_h \times \Bb_h) \cdot \bn \du\sigma = -\teb^\top  \tM^1_b \tbb. 
\end{align}

\subsection{Perfect Conductor Boundary Conditions}\label{sec:perfectconductorboundary}
For simulations all the way to the wall of the fusion device, perfect conductor boundary conditions as described in \cite{guo1993global} are physically relevant, 
\begin{align}\label{essential}
\tEb \times \tilde \bn = 0, 
\end{align} 
This also implies that with matching initial conditions
\begin{align}\label{derived}
\dot{\tBb} \cdot \tilde \bn=0.
\end{align}  
This can be seen by taking the scalar product of Faraday's law \eqref{strongFaraday} with the normal vector, 
\begin{align*}
\frac{\partial}{\partial t}\tBb \cdot \bn &= -(\nabla_\xib \times \tEb) \cdot \tilde \bn = 0,
\end{align*}
since $\tilde{\bn}$ is constant.
Hence, we have found our pair of boundary conditions for the electromagnetic fields. 

In our case, the normal vector simplifies to $\tilde{\bn}=(\pm 1,0,0)^\top$ so that the perfect conductor boundary conditions translate to $$(\tEb \times (\pm1,0,0)^\top)\bigg|_{\partial \tom} = \pm\left((0,\tE_3(\xib), -\tE_2(\xib))^\top \right)\bigg|_{\partial \tom} = \mathbf{0}$$ and  $$\left(\frac{\partial \tBb}{\partial t} \cdot (\pm1,0,0)^\top \right) \bigg|_{\partial \tom} = \pm\dot{\tB}_1(\xib)\bigg|_{\partial \tom}= 0.$$

To satisfy the boundary conditions \eqref{essential}, the differential 1-forms have to be in the following constrained Sobolev space: 
$$H_0(\curl,\tom):= \{\boldsymbol{\omega} \in L^2(\tom)^3 | \curl \boldsymbol{\omega} \in L^2(\tom)^3 \land \left(\boldsymbol{\omega}\times \tilde{\bn}\right)\bigg|_{\partial \tom}=\mathbf{0} \}.$$
Additionally, to satisfy the derived boundary conditions \eqref{derived}, the differential 2-forms have to be in 
$$ H_0(\div,\tom):= \{\boldsymbol{\omega} \in L^2(\tom)^3 |  \div \boldsymbol{\omega} \in L^2 (\tom) \land \left(\boldsymbol{\omega} \cdot \tilde{\bn}\right)\bigg|_{\partial \tom}=0 \}.$$

For our discretisation with spline finite elements, we impose \eqref{essential} as essential boundary conditions. This yields the following constraints on the spline basis functions:  
\begin{equation}\label{perfectconductorsplines}
\begin{aligned}
\tLa^{1,2}(\xib)\bigg|_{\partial \tom} =0,
\quad
\tLa^{1,3}(\xib)\bigg|_{\partial \tom} =0,
\quad
\tLa^{2,1}(\xib)\bigg|_{\partial \tom} =0.
\end{aligned}
\end{equation}
Note that the constrained spline finite element spaces resulting from imposed Dirichlet boundary conditions are given in \cite[Sec.~3.3]{BUFFA20101143}.

Consequently, the boundary term from Amp{\`e}re's equation vanishes for the perfect conductor boundary conditions, since the 1-form boundary matrix $\tM_b^1$ equals zero:
{\footnotesize
\begin{align} \label{perfectconductorboundarymatrix}
\int_0^1 \int_0^1 \left(  \frac{1}{J_F}\begin{pmatrix}
0 & 0& 0 \\
-\bt_3 \cdot \bt_1 \tLa^{1,2} \tLa^{2,1}  & -\bt_3 \cdot \bt_2 \tLa^{1,2} \tLa^{2,2}  &-\bt_3 \cdot \bt_3 \tLa^{1,2} \tLa^{2,3}  \\
\bt_2 \cdot \bt_1 \tLa^{1,3} \tLa^{2,1} & \bt_2 \cdot \bt_2 \tLa^{1,3} \tLa^{2,2} & \bt_2 \cdot \bt_3 \tLa^{1,3} \tLa^{2,3}
\end{pmatrix} \right)\bigg|_{\partial \tom}  \du\xi_2 \du\xi_3 = \mathbf{0}.
\end{align} }
This means that the Poynting flux \eqref{Poyntingflux} equals zero, too. Therefore, we do not have field energy exchange over the boundary and the system is closed.

In Proposition \ref{prop:boundaryderham}, we have proven that the clamped basis splines by construction satisfy a discrete de Rham sequence. Now, we assume perfect conductor boundary conditions \eqref{perfectconductorsplines} for the spline basis functions and have to check that \eqref{boundaryderham} still holds at the boundary. 
A compatibility condition is given by the following proposition:
\begin{prop}
Under the assumption of perfect conductor boundary conditions, 
the clamped splines constructed in subsection \ref{sec:boundarysplines} form a discrete de Rham sequence with boundary if and only if they satisfy the condition
\begin{align}\label{S_starzero}
\Sb_\star^p(\xi_1)\bigg|_{\partial \tom} = 0.
\end{align}
\end{prop}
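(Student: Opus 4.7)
I would prove the equivalence in two directions, exploiting the tensor product structure of the basis.

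For the ``only if'' direction, I would argue directly from \eqref{perfectconductorsplines}. The perfect conductor constraint requires $\tLa^{1,2}(\xib)|_{\partial \tom}=0$, and by definition
\[
\tLa^{1,2}(\xib)=\Sb_\star^p(\xi_1)\otimes \Sb^{p-1}(\xi_2)\otimes \Sb^p(\xi_3).
\]
Since the periodic bases $\Sb^{p-1}(\xi_2)$ and $\Sb^p(\xi_3)$ form (nontrivial) partitions of unity on $[0,1]$, evaluating on $\xi_1\in\{0,1\}$ forces $\Sb_\star^p(\xi_1)|_{\partial\tom}=\mathbf 0$. The same factor sits in $\tLa^{1,3}$ and $\tLa^{2,1}$, so either of the other two conditions in \eqref{perfectconductorsplines} gives the same conclusion.

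For the ``if'' direction, I would proceed in three steps. \emph{Step 1}: check that $\Sb_\star^p(\xi_1)|_{\partial\tom}=0$ implies \eqref{perfectconductorsplines}. This is immediate because all three of $\tLa^{1,2}$, $\tLa^{1,3}$, $\tLa^{2,1}$ carry the $\Sb_\star^p(\xi_1)$ factor in their tensor product, so they vanish on $\partial\tom$. \emph{Step 2}: recall that Proposition \ref{prop:boundaryderham} establishes the identities \eqref{boundaryderham} together with $\C\G=0$ and $\D\C=0$ pointwise on $\tom$, \emph{independently} of any boundary condition; these identities survive restriction to any subspace.

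\emph{Step 3}: verify that the differential operators $\nabla_\xib$, $\nabla_\xib\times$, $\nabla_\xib\cdot$ map the constrained spline spaces (those satisfying \eqref{perfectconductorsplines}) into each other, so that \eqref{boundaryderham} indeed defines a de Rham sequence \emph{with boundary}. For $\nabla_\xib\tLa^0$, the components $\partial_{\xi_2}\tLa^0$ and $\partial_{\xi_3}\tLa^0$ keep the $\Sb_\star^p(\xi_1)$ factor untouched and therefore vanish on $\partial\tom$, landing in the constrained $\tLa^{1,2}$- and $\tLa^{1,3}$-spaces; the $\xi_1$-derivative sits in $\tLa^{1,1}$ which carries no constraint. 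For $\nabla_\xib\times\tLab^1$, only the first component $\partial_{\xi_2}\tLa^{1,3}-\partial_{\xi_3}\tLa^{1,2}$ needs a boundary check, and it vanishes on $\partial\tom$ because differentiation in $\xi_2,\xi_3$ leaves the vanishing $\Sb_\star^p(\xi_1)$ factor of $\tLa^{1,2}$ and $\tLa^{1,3}$ intact, hence lies in the constrained $\tLa^{2,1}$-space; the other two curl components target $\tLa^{2,2}$ and $\tLa^{2,3}$ which are unconstrained. Finally $\nabla_\xib\cdot\tLab^2\in\mathrm{span}\,\tLa^3$ is unconstrained.

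I do not expect a serious obstacle: the whole argument is essentially bookkeeping of which tensor factor carries the boundary condition. The one point worth being careful about is that every time a boundary-vanishing component of $\tLab^1$ or $\tLab^2$ is differentiated in the cochain maps, the differentiation acts only in $\xi_2$ or $\xi_3$, never in $\xi_1$, so the crucial $\Sb_\star^p(\xi_1)$ factor is never destroyed.
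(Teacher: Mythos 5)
Your ``if'' direction is essentially sound and coincides with the paper's sufficiency computation: your Step 3 is the paper's boundary check of the three identities in \eqref{boundaryderham}, merely reorganised as invariance of the constrained spaces under the cochain maps. The genuine gap is in your ``only if'' direction. You derive \eqref{S_starzero} directly from the perfect conductor constraints \eqref{perfectconductorsplines}, i.e.\ from the standing hypothesis of the proposition, not from the requirement that the de Rham identities hold at the boundary. Read that way the equivalence becomes vacuous (both sides would follow from the hypothesis alone), and the derivation conflates distinct occurrences of the symbol $\Sb_\star^p$. The constraints \eqref{perfectconductorsplines} act only on the components $\tLa^{1,2}$, $\tLa^{1,3}$ and $\tLa^{2,1}$; they impose nothing on the 0-form basis $\tLa^0=\Sb_\star^p(\xi_1)\otimes\Sb^p(\xi_2)\otimes\Sb^p(\xi_3)$ of \eqref{boundaryspline0}, because the perfect conductor condition \eqref{essential} carries no essential boundary condition for 0-forms. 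So your partition-of-unity argument only shows that the reduced family must be used in the three constrained components --- which is just a restatement of \eqref{perfectconductorsplines} --- and does not show that $\Sb_\star^p(\xi_1)\big|_{\partial \tom}=0$ must hold for the factor appearing in $\tLa^0$.

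That last implication is precisely the content of the paper's necessity argument, and it comes from the gradient identity, not from the boundary conditions: at $\partial\tom$ the second and third components of $\nabla_\xib\tLa^0$ are $\Sb_\star^p(\xi_1)\big|_{\partial \tom}\otimes\partial_{\xi_2}\Sb^p(\xi_2)\otimes\Sb^p(\xi_3)$ and $\Sb_\star^p(\xi_1)\big|_{\partial \tom}\otimes\Sb^p(\xi_2)\otimes\partial_{\xi_3}\Sb^p(\xi_3)$, while the corresponding components of $\tLab^1\G$ vanish there by \eqref{perfectconductorsplines}; hence $\nabla_\xib\tLa^0=\tLab^1\G$ can hold at the boundary only if the 0-form space is also reduced by Dirichlet conditions, i.e.\ only if \eqref{S_starzero} holds for the family used in $\tLa^0$. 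If one instead kept the full clamped basis for the 0-forms --- which the perfect conductor conditions by themselves permit --- the first identity of \eqref{boundaryderham} would fail at $\partial\tom$. Your proposal never produces this step; indeed, in Step 3 you tacitly assume it when you claim that $\partial_{\xi_2}\tLa^0$ and $\partial_{\xi_3}\tLa^0$ vanish on $\partial\tom$ ``because the $\Sb_\star^p(\xi_1)$ factor is untouched'': that is true only after the 0-form reduction whose necessity you were supposed to prove. (For the curl identity, by contrast, your observation is correct: once $\tLa^{1,2}$ and $\tLa^{1,3}$ are constrained, the first component of the curl vanishes at the boundary automatically, so the binding condition really does come from the gradient.)
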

\begin{proof}
We want to show that the B-splines still form a discrete de Rham sequence, i.e. the three properties in \eqref{boundaryderham} are satisfied. We start with the first property stating
\begin{align*}
\nabla_\xib \tLa^0 = \tLab^1 \G.
\end{align*}

Then, for a 0-form $\tPhi=\tLa^0(\xi) \tphib$,  we obtain at the boundary
\begin{align*}
\left(\nabla_\xib \tLa^0(\xib) \right)\bigg|_{\partial \tom} \tphib=&
\begin{pmatrix}
\partial_{\xi_1} \Sb_\star^p(\xi_1)\bigg|_{\partial \tom} \otimes \Sb^p(\xi_2)  \otimes \Sb^p(\xi_3) \\
 \Sb_\star^p(\xi_1)\bigg|_{\partial \tom} \otimes \partial_{\xi_2} \Sb^p(\xi_2)  \otimes \Sb^p(\xi_3) \\
 \Sb_\star^p(\xi_1)\bigg|_{\partial \tom} \otimes \Sb^p(\xi_2)  \otimes \partial_{\xi_3} \Sb^p(\xi_3) 
\end{pmatrix} \tphib \\
\stackrel{!}{=} &
\begin{pmatrix}
\tLa^{1,1}(\xib)\bigg|_{\partial \tom} D_1 \phib\\0\\0
\end{pmatrix} \stackrel{\eqref{perfectconductorsplines}}{=} \tLab^1(\xib)\bigg|_{\partial \tom} \G \tphib.
\end{align*}
This is satisfied non-trivially if and only if $\Sb_\star^p(\xi_1)\bigg|_{\partial \tom} = 0$.

Next, for the second property of \eqref{boundaryderham}, we look at the rotation of a 1-form $\tAb=\tLab^1 \tab$ at the boundary,
{\footnotesize
\begin{align*}
&(\nabla_\xib \times \tLab^1)\bigg|_{\partial \tom} \tab \\
&= \begin{pmatrix}
 \Sb_\star^p(\xi_1)\bigg|_{\partial \tom} \otimes \partial_{\xi_2} \Sb^p(\xi_2)  \otimes \Sb^{p-1}(\xi_3) -  \Sb_\star^p(\xi_1)\bigg|_{\partial \tom} \otimes \Sb^{p-1}(\xi_2)  \otimes \partial_{\xi_3} \Sb^p(\xi_3) 
 \\
 \Sb_\star^{p-1}(\xi_1)\bigg|_{\partial \tom} \otimes \Sb^p(\xi_2)  \otimes \partial_{\xi_3} \Sb^p(\xi_3)-
\partial_{\xi_1} \Sb_\star^p(\xi_1)\bigg|_{\partial \tom} \otimes \Sb^p(\xi_2)  \otimes \Sb^{p-1}(\xi_3) 
\\
\partial_{\xi_1} \Sb_\star^p(\xi_1)\bigg|_{\partial \tom} \otimes \Sb^{p-1}(\xi_2)  \otimes \Sb^p(\xi_3) - \Sb_\star^{p-1}(\xi_1)\bigg|_{\partial \tom} \otimes \partial_{\xi_2} \Sb^p(\xi_2)  \otimes \Sb^p(\xi_3) 
\end{pmatrix} \tab
\\
&\stackrel{!}{=} \begin{pmatrix}
0\\
\tLa^{2,2}(\xib)\bigg|_{\partial \tom} (D_3 \tab_1 - D_1 \tab_3) \\
\tLa^{2,3}(\xib)\bigg|_{\partial \tom} (D_1 \tab_2 - D_2 \tab_1)
\end{pmatrix} \stackrel{\eqref{perfectconductorsplines}}{=}  \tLab^2(\xib)\bigg|_{\partial \tom} \C \tab.
\end{align*}
}
This leads again to the compatibility condition $\Sb_\star^p(\xi_1)\bigg|_{\partial \tom} = 0$. 

Since there are no boundary conditions on the differential 3-form basis the third property of \eqref{boundaryderham} is also satisfied at the boundary,
\begin{align*}
\left(\nabla_\xib \cdot \tLab^2(\xib)\right)\bigg|_{\partial \tom} = \tLa^3(\xib)\bigg|_{\partial \tom} \D,
\end{align*}
which concludes the proof.
\end{proof}

From this proposition, it follows that also the boundary part from Gauss' law vanishes, since the 0-form boundary matrix equals zero,
\begin{align*}
\tM_b^0 = \int_0^1\int_0^1 \left( \tLa^0 \left(\bn_1 \cdot \bn_1 \tLa^{1,1},\bn_1 \cdot \bn_2 \tLa^{1,2}, \bn_1 \cdot \bn_3 \tLa^{1,3}\right)  J_F \right)\bigg|_{\partial \tom} \du\xi_2\du\xi_3 =0.
\end{align*}

We already showed that the initial boundary condition $\tBb \cdot \tilde \bn = C$ for the magnetic field is conserved over time because the magnetic field is updated with the curl of the electric field.  Assuming that the basis functions form a de Rham sequence and that  $\tBb$ is computed from the 1-form potential $\tAb$, we have $\tBb \cdot \tilde \bn=0$, 
since
{\footnotesize
\begin{align*}
\left(\tBb (\xib)\cdot \tilde{\bn}\right)\bigg|_{\partial \tom} =&\left(\nabla_\xib \times \tLab^1 \cdot \tilde{\bn} \right)\bigg|_{\partial \tom}  \tab \\
=& \Sb^p_\star(\xi_1)\bigg|_{\partial \tom} \otimes \partial_{\xi_2} \Sb^p(\xi_2) \otimes \Sb^p(\xi_3)  -  \Sb^p_\star(\xi_1)\bigg|_{\partial \tom} \otimes \Sb^p(\xi_2) \otimes  \partial_{\xi_3} \Sb^p(\xi_3) \tab = 0.
\end{align*} 
}
\subsection{Particle Boundary Conditions} \label{sec:particleboundary}
When the particle trajectories cross the boundary, we also need to impose some boundary conditions on them. 
Note that we avoid a possible singularity by excluding the pole on the physical mesh, as can be seen exemplarily in Figure \ref{fig:radialmeshes}. Therefore, on the logical mesh, we have an inner boundary at $\xi_1=0$ and an outer boundary at $\xi_1=1$.
Following \cite{chacon2019}, we consider reflecting boundaries. We make use of the normal vector $\bn=N(\xib) \tilde \bn=\bn_1$ to compute the reflection at the boundary as
\begin{itemize}
\item Inner boundary at $\xi_1=0$: $$\xi_1 = - \xi_1, \quad \vb = \vb -2(\bn\cdot \vb) \frac{\bn}{\|\bn\|^2}= \vb -2(\bn_1\cdot \vb) \frac{\bn_1}{\|\bn_1\|^2},$$
\item Outer boundary at $\xi_1=1$: 
$$\xi_1 = 2 - \xi_1, \quad \vb = \vb -2(\bn_1\cdot \vb) \frac{\bn_1}{\|\bn_1\|^2}.$$ 
\end{itemize}
The particle weight is kept constant. 
The reflecting boundary prevents heat fluxes and currents at the boundary and ensures exact energy conservation.

In this study, we additionally consider a constant in- and outflow of particles: A particle of identical weight is reinserted at the opposite boundary with the same velocity, which can be considered as periodic particle boundary conditions,
\begin{itemize}
\item Inner boundary at $\xi_1=0$: $\xi_1 = \xi_1+1, \vb = \vb,$
\item Outer boundary at $\xi_1=1$:  $\xi_1 = \xi_1-1, \vb = \vb.$
\end{itemize}
This choice again conserves mass, energy and magnetic momentum. This second boundary conditions mimics a periodic behaviour and is considered here as an intermediate step for verification purposes rather than being physically motivated.

\subsection{Semi-discrete Poisson System for Perfect Conductor Boundary Conditions}
\begin{prop}
The time evolution of the equations of motion for $\tub=(\Xib,\Vb,\teb,\tbb)$ with perfect conductor boundary conditions can be expressed as the Poisson system
\begin{align*}
\frac{\du \tub}{\du t} = \JJ(\tub) D\tilde{\mathcal{H}}_h,
\end{align*}
with the Poisson matrix
{\small
\begin{align}\label{Poissonmatrix}
\JJ=
 \begin{pmatrix}
0& \NN^\top(\Xib) \WM_m^{-1} &0&0\\
-\WM_m^{-1} \NN(\Xib) & \WM_{\frac{q}{m}} \NN(\Xib) \tBB(\Xib,\tbb) \NN^\top(\Xib) \WM_m^{-1} & \WM_{\frac{q}{m}} \NN(\Xib)\tLaBB^1(\Xib) \tM_1^{-1}&0\\
0&-\tM_1^{-1} \tLaBB^1(\Xib)^\top \NN^\top(\Xib) \WM_{\frac{q}{m}}  &0&\tM_1^{-1}\C^\top \\
0&0&-\C\tM_1^{-1}&0
\end{pmatrix}
\end{align}}
where $\tBB(\Xib,\tbb)$ is a $3 N_p \times 3 N_p$ block matrix with generic block 
\begin{align*}
\hat{\tilde B}_h(\xib_p,t)=\sum_{i=1}^{N_2} \begin{pmatrix}
0&\tb_{i,3}(t)\tLa_i^{2,3}(\xib_p)&-\tb_{i,2}(t)\tLa_i^{2,2}(\xib_p)\\
-\tb_{i,3}(t)\tLa_{i}^{2,3}(\xib_p)&0&\tb_{i,1}(t)\tLa_i^{2,1}(\xib_p)\\ \tb_{i,2}(t)\tLa_i^{2,2}(\xib_p)&-\tb_{i,1}(t)\tLa_i^{2,1}(\xib_p)&0
\end{pmatrix}.
 \end{align*}
\end{prop}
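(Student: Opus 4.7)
The plan is to proceed by direct verification, following the pattern established for periodic boundary conditions in \cite{perse2021geometric}. First, I would identify the semi-discrete Hamiltonian as the sum of the particle kinetic energy and the field energy from \eqref{fieldenergy},
\begin{align*}
\tilde{\mathcal{H}}_h(\tub) \;=\; \tfrac{1}{2}\Vb^\top \WM_m \Vb + \tfrac{1}{2}\teb^\top \tM_1 \teb + \tfrac{1}{2}\tbb^\top \tM_2 \tbb,
\end{align*}
noting that it does not depend on $\Xib$ because the mass matrices $\tM_1,\tM_2$ are built from the fixed map $F$. Consequently $D\tilde{\mathcal{H}}_h = (0,\WM_m\Vb,\tM_1\teb,\tM_2\tbb)^\top$, and multiplying row by row with the Poisson matrix \eqref{Poissonmatrix} should reproduce the four components of the equations of motion: the characteristics $\dot{\Xib} = \NN^\top(\Xib)\Vb$; the Lorentz-like velocity update $\dot{\Vb} = \WM_{\frac{q}{m}}\NN(\Xib)\bigl(\tLaBB^1(\Xib) \teb + \tBB(\Xib,\tbb)\, \NN^\top(\Xib) \Vb\bigr)$; the discrete Amp\`ere law \eqref{discreteAmpere} after left-multiplying the $\dot\teb$-row by $\tM_1$ and identifying the particle-current $\tjb = \WM_q \tLaBB^1(\Xib)^\top \NN^\top(\Xib)\Vb$; and Faraday's law \eqref{discreteFaraday} directly from the $\dot\tbb$-row. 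The identification of the velocity update with the discrete Vlasov characteristic of Proposition \ref{prop:transformedVM} rests on the observation that the block matrix $\tBB$ is the matrix representation of the cross product, so that $\tBB \NN^\top \vb = (\NN^\top\vb) \times \tBb$ for each particle.

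Second, I would verify antisymmetry of $\JJ$ by inspection of \eqref{Poissonmatrix}: the four off-diagonal block pairs are manifestly minus-transposes of each other, while the only diagonal block, the $(\Vb,\Vb)$-block $\WM_{\frac{q}{m}}\NN\tBB\NN^\top \WM_m^{-1}$, is skew because $\tBB$ is skew-symmetric and the two diagonal weight matrices commute with the particle-wise block-diagonal matrix $\NN$. The Jacobi identity, which is the only genuinely nontrivial ingredient, I would inherit from the periodic case proved in \cite{perse2021geometric}: the proof there uses only the discrete de Rham relations $\C\G = 0$ and $\D\C = 0$, the fact that $\tBB$ arises from a 2-form expanded in $\tLab^2$, and standard identities for the particle-field coupling blocks; all of these transfer verbatim to the clamped spline complex of Proposition \ref{prop:boundaryderham}.

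The main obstacle, and in fact the structural heart of the proposition, is to ensure that no boundary contributions enter $\JJ$ despite the non-periodic geometry. The weak formulations \eqref{weakAmpere} and \eqref{weakGauss} a priori generate the boundary matrices $\tM_b^1$ and $\tM_b^0$, and if either were nonzero it would add a term to the Amp\`ere row that cannot be absorbed into any antisymmetric pairing with the Faraday row, thereby breaking the Poisson structure. The decisive ingredient is that the perfect conductor constraints \eqref{perfectconductorsplines}, together with the compatibility condition \eqref{S_starzero} needed for the de Rham sequence, force $\tM_b^1 = 0$ by \eqref{perfectconductorboundarymatrix} and $\tM_b^0 = 0$ as shown immediately after. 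The discrete Amp\`ere law then reduces to $\tM_1\dot\teb = \C^\top \tM_2 \tbb - \tjb$, which is exactly the row produced by $\JJ D\tilde{\mathcal{H}}_h$, and the Poynting flux \eqref{Poyntingflux} vanishes identically so that no field energy crosses the boundary. With the boundary terms eliminated, the Poisson system claimed in the proposition follows.
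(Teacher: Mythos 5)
Your proposal is correct and takes essentially the same route as the paper: the paper's proof likewise consists of inheriting the Jacobi identity from the periodic case of \cite[Theorem 4.3]{perse2021geometric} by noting that the discrete de Rham sequence, the symmetry of the mass matrices, the antisymmetric pairing of the derivative-matrix blocks and the diagonal particle weight matrices all survive the passage to clamped splines, while the vanishing of $\tM_b^1$ and $\tM_b^0$ under the perfect conductor constraints---your third paragraph---is precisely the justification given in the surrounding text for why the equations of motion carry no boundary terms. Your explicit row-by-row evaluation of $\JJ D\tilde{\mathcal{H}}_h$ and the antisymmetry check merely spell out what the paper states immediately after the proposition, so there is no substantive difference in approach.
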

\begin{proof}
The Jacobi identity can be proven in the same way as for the case with periodic boundaries in \cite[Theorem 4.3]{perse2021geometric}. The important properties that the discretisation builds a discrete de Rham sequence, the antisymmetry of $\C$, the symmetry of the mass matrices and the diagonal structure of the particle weight matrices is still satisfied for clamped splines with perfect conductor boundary conditions.
\end{proof}

The discrete Hamiltonian takes the following form:
\begin{align}\label{Hamiltonian}
\tilde{\mathcal{H}}_h= \frac{1}{2}\Vb^\top \WM_m \Vb+\frac{1}{2}\teb^\top \tM_1 \teb+\frac{1}{2}\tbb^\top \tM_2 \tbb
\end{align}
and the derivative can be computed as
\begin{align*}
D\tilde{\mathcal{H}}_h(\tub)=(0,\WM_m \Vb , \tM_1 \teb,\tM_2 \tbb)^\top.
\end{align*}

Consequently, the equations of motion are 
\begin{equation}
\begin{aligned} \label{equations of motion}
\dot{\Xib}&=\NN^\top(\Xib)\Vb,
\\
\dot{\Vb}&=\WM_{\frac{q}{m}}\NN(\Xib) \left( \tLaBB^1(\Xib) \teb+ \tBB(\Xib, \tbb) \NN^\top(\Xib) \Vb \right),\\
\tM_1 \dot{\teb} &=\C^\top\tM_2 \tbb -  \tLaBB^1(\Xib)^\top \NN^\top(\Xib) \WM_q  \Vb,\\
\dot{\tbb}&=-\C \teb.
\end{aligned}
\end{equation}

This semi-discretisation with perfect conductor boundary conditions yields energy conservation due to the antisymmetry of the Poisson matrix. This can also be computed directly via
\begin{align*}
\frac{\du \tilde{\mathcal{H}}_h}{\du t }=
 &\Vb^\top \WM_m \dot{\Vb} + \frac{\du}{\du t }\tilde{\mathcal{H}}_{EB}
\\ 
\stackrel{ \eqref{equations of motion}, \eqref{fieldenergy}}{=} & \Vb^\top \WM_{q}\NN \left( \tLaBB^1 \teb+ (\NN^\top \Vb) \times \tBb \right) +   \teb^\top  \tM^1_b \tbb - \teb^\top \tjb
\\
\stackrel{ \eqref{perfectconductorboundarymatrix} }{=}&0.
\end{align*}

\section{Temporal Discretisation}
\label{sec:temporal_discretisation}
Following our earlier work \cite{perse2021geometric} on a periodic domain, we consider both a semi-explicit Poisson integrator based on a Hamiltonian splitting \cite{crouseilles2015hamiltonian,he2015hamiltonian} and an energy-conserving discrete gradient scheme \cite{quispel1996discrete,kormann2021energy} combined with an antisymmetric splitting of the Poisson matrix.

\subsection{Charge Conserving Time Discretisation}
We split the discrete Hamiltonian \eqref{Hamiltonian} into three parts,
\begin{align*} \tilde{\mathcal{H}}_h=\tilde{\mathcal{H}}_p+\tilde{\mathcal{H}}_E+\tilde{\mathcal{H}}_B
\end{align*}
with 
\begin{align*}
\tilde{\mathcal{H}}_p= \frac{1}{2}\Vb^\top \WM_m \Vb,\ \tilde{\mathcal{H}}_E=\frac{1}{2}\teb^\top \tM_1 \teb, \ \tilde{\mathcal{H}}_B=\frac{1}{2}\tbb^\top \tM_2 \tbb
\end{align*}
to obtain the three subsystems
\begin{align*}
\dot{\tub}=\{\tub,\tilde{\mathcal{H}}_p \},\  \dot{\tub}=\{\tub,\tilde{\mathcal{H}}_E \}, \ \dot{\tub}=\{\tub,\tilde{\mathcal{H}}_B \}.
\end{align*}

For $\tilde{\mathcal{H}}_E$, the discrete equations of motion are
\begin{align*}
\Vb^{n+1}&=\Vb^n+\Delta t\WM_{\frac{q}{m}} \NN (\Xib^n) \tLaBB^1(\Xib^n) \teb^n,
\\
\tbb^{n+1}&=\tbb^n-\Delta t \C \teb^n.
\end{align*}

For $\tilde{\mathcal{H}}_B$, we get 
\begin{align*}
\tM_1 \teb^{n+1}&=\tM_1 \teb^n+\Delta t \C^\top \tM_2 \tbb^n.
\end{align*}

For $\tilde{\mathcal{H}}_p$, we obtain the following equations: 
\begin{subequations}\label{hs}
\begin{align}
\Xib^{n+1}=&\Xib^n+ \Delta t \NN^\top\left(\overline{\Xib}\right)\overline{\Vb},
\\\notag
\Vb^{n+1}=&\Vb^{n}+ \Delta t \WM_{\frac{q}{m}} \NN\left(\overline{\Xib}\right) \tBB\left(\overline{\Xib},\tbb^n\right) \NN^\top\left(\overline{\Xib}\right)\overline{\Vb},
\\
\tM_1 \teb^{n+1}= &\tM_1 \teb^n-\int^{t^{n+1}}_{t^n}   \tLaBB^1(\Xib(\tau))^\top \du\tau \WM_q \NN^\top\left(\overline{\Xib}\right)\overline{\Vb},
\end{align}
\end{subequations}
where $\overline{\Xib}=\frac{\Xib^{n+1}+\Xib^n}{2}, \overline{\Vb}=\frac{\Vb^{n+1}+\Vb^n}{2}$ and $\Xib(\tau)= \frac{(t^{n+1}-\tau)\Xib^n+(\tau-t^n)\Xib^{n+1}}{\Delta t}$.
For the simulation results of this Hamiltonian splitting we use the acronym HS.

Based on the bracket in \cite{crouseilles2015hamiltonian}, which yields a "Pseudo-Poisson matrix" that does not satisfy the Jacobi identity, we get slightly different subsystems.  

For $\tilde{\mathcal{H}}_E$, the discrete equations of motion are given by
\begin{align*}
\Vb^{n+1}&=\Vb^n+\Delta t\WM_{\frac{q}{m}} \NN (\Xib^n) \tLaBB^1(\Xib^n) \teb^n,
\\
\tbb^{n+1}&=\tbb^n-\Delta t \C \teb^n.
\end{align*}

For $\tilde{\mathcal{H}}_B$, we get 
\begin{align*}
\left(\mathbb{I}-\frac{\Delta t}{2} \WM_{\frac{q}{m}} \NN\tBB(\Xib^n,\tbb^n)\NN^\top\right) \Vb^{n+1}&= \left(\mathbb{I}+\frac{\Delta t}{2} \WM_{\frac{q}{m}} \NN \tBB(\Xib^n,\tbb^n)\NN^\top\right) \Vb^n,
\\
\tM_1 \teb^{n+1}&=\tM_1 \teb^n+\Delta t \C^\top \tM_2 \tbb^n.
\end{align*}

For $\tilde{\mathcal{H}}_p$, we obtain the following equations: 
\begin{subequations}\label{cef}
\begin{align}
\Xib^{n+1}&=\Xib^n+  \Delta t \NN^\top\left(\bar{\Xib}\right) \Vb^n,
\\ 
\tM_1 \teb^n&= \tM_1 \teb^n- \int_{t^n}^{t^{n+1}} \tLaBB^1(\Xib(\tau))^\top  \du\tau \WM_q  \NN^\top\left(\bar{\Xib}\right) \Vb^n.
\end{align}
\end{subequations}
For the simulation results of this method, we use the acronym CEF.

In both semi-explicit time discretisation schemes, the charge conservation depends on the exact solution of the particle trajectory in the following part:
\begin{align*}
\tM_1 \frac{\teb^{n+1}-\teb^{n}}{\Delta t} &=- \int_{t^n}^{t^{n+1}} \tLaBB^1(\Xib(\tau))^\top  \du\tau \WM_q  \frac{\Xib^{n+1}-\Xib^{n}}{\Delta t}.
\end{align*}
Since the particle update is computed iteratively, it is important that we impose the boundary conditions after computing the midpoint $\bar{\Xib}=\frac{\Xib^{n+1}+\Xib^n}{2}$ in \eqref{hs} and \eqref{cef}.
In the case of a particle trajectory crossing the boundary, we solve this part with a split line integral. First, we compute the point of intersection of the particle trajectory with the boundary and compute the line integral up to that point. 
Second, we reflect the particle position and velocity at this point and compute the line integral between the point of intersection and the new particle position.

\subsection{Energy Conserving Time Discretisation}
Energy-conserving time discretisation can be obtained by applying the trapezoidal rule to the gradient of the Hamiltonian together with an antisymmetric form of the Poisson matrix. In order to simplify the non-linearity in the system, we apply this strategy to subsystems obtained by an antisymmetric splitting of the Poisson matrix (cf.~\cite{kormann2021energy}).
We split the discrete Poisson matrix \eqref{Poissonmatrix}, keeping its skew-symmetry in each subsystem. So, we obtain the following four subsystems:
\begin{align*}
&\text{ system 1: } 
\dot{\Xib}= \NN^\top(\Xib)\Vb,
\\
&\text{ system 2: }
\dot{\Vb}= \WM_{\frac{q}{m}} \NN(\Xib) \tBB(\Xib,\tbb) \NN^\top(\Xib) \Vb,\\
&\text{ system 3: }
 \dot{\tbb}= -\C \teb, \ \tM_1 \dot{\teb}=\C^\top \tM_2 \tbb,
\\
&\text{ system 4: }
\dot{\Vb}= \WM_{\frac{q}{m}} \NN (\Xib) \tLaBB^1(\Xib) \teb, \
\tM_1 \dot{\teb} = - \tLaBB^1(\Xib)^\top \NN^\top(\Xib) \WM_q  \Vb.
\end{align*}

System 1 is discretised as
\begin{align*}
\Xib^{n+1} = \Xib^n+\Delta t \frac{\NN^\top(\Xib^{n+1})+ \NN^\top(\Xib^n)}{2} \Vb^n.
\end{align*}

For System 2, we get the following discretised equations:
\begin{align*}
\left(\mathbb{I}-\frac{\Delta t}{2} \WM_{\frac{q}{m}} \NN\tBB \NN^\top \right)\Vb^{n+1} =\left(\mathbb{I}+\frac{\Delta t}{2} \WM_{\frac{q}{m}} \NN \tBB \NN^\top\right) \Vb^{n}.
\end{align*}

System 3 is decoupled with the Schur complement $\mathsf{S}=\tM_1+\frac{\Delta t^2}{4} \C^\top \tM_2 \C$,
\begin{align*}
\teb^{n+1}&=\mathsf{S}^{-1}\left((\tM_1-\frac{\Delta t^2}{4} \C^\top \tM_2 \C)\teb^n+\Delta t \C^\top \tM_2 \tbb^n \right),\\
\tbb^{n+1}&=\tbb^n-\frac{\Delta t}{2} \C(\teb^{n+1}+\teb^n).
\end{align*}
Likewise, we obtain the decoupled equations for system 4
\begin{align*}
\teb^{n+1}&=\mathsf{S}^{-1}\left((\tM_1-\frac{\Delta t^2}{4} \WM_q \WM_{\frac{q}{m}} \mathsf{M}^\star)\teb^n-\Delta t (\tLaBB^1)^\top \NN^\top  \WM_q \Vb^n \right),\\
\Vb^{n+1}&=\Vb^n+\frac{\Delta t}{2} \WM_{\frac{q}{m}} \NN \tLaBB^1(\teb^{n+1}+\teb^n),
\end{align*}
where we introduced the particle mass matrix $\mathsf{M}^\star:=(\tLaBB^1)^\top \NN^\top \NN \tLaBB^1$ and the Schur complement $\mathsf{S}=\tM_1+\frac{\Delta t^2}{4}\WM_q \WM_{\frac{q}{m}}\mathsf{M}^\star.$ 
The simulation results of this energy conserving discrete gradient method are labeled as DisGradE.

The energy conservation of the implicit time discretisation method depends on the solution of the antisymmetric subsystems. Therefore, the DisGradE method stays energy conserving because the particle position is updated independently of the particle velocity and the electromagnetic fields provided that the number of particles is constant.

\section{Preconditioner for the Inversion of the Spline Mass Matrices}\label{sec:preconditioner}
Our methods require the inversion of the finite-element mass matrices based on $p-$th order splines. Since the condition number increases exponentially with the degree $p$, there is a need for preconditioning in an iterative solver. Donatelli et al.~\cite{donatelli2015robustgalerkin} have proposed a multilevel solver with a smoother based on a preconditioned conjugate gradient (PCG) solver. The smoothing is based on the eigenvalues of the matrices on a periodic uniform grid. The basic idea behind is that circulant matrices are diagonal in Fourier space and  hence, can be cheaply inverted based on the Fast Fourier Transform (FFT). On a periodic tensor product grid without coordinate transformations, the finite element matrices are circulant so that the linear equation systems can be solved directly after Fourier transformation yielding a very efficient solver compared to iterative solvers in this case. We refer to \cite{kormann2021energy} for a detailed description of this solution strategy. In this work, the matrices are no longer circulant for two reasons: Due to the non-periodic boundaries and due to the coordinate transformation. 

Observing the structure of the mass matrices for clamped splines of degree $p$, we notice that only the first and last $p+2$ rows differ from the mass matrix for periodic splines of the same degree. This observation motivates the alternative idea to use the eigenvalues of the periodic matrix only for the middle part that is identical to a periodic mass matrix, except for the periodicity at the boundary, and invert the boundary part, which consists of the first and last $p+2$ rows of the mass matrix, separately. 
\begin{figure}[!ht]
\centering
\begin{subfigure}{0.8\textwidth}
  \centering
  \includegraphics[width=\linewidth]{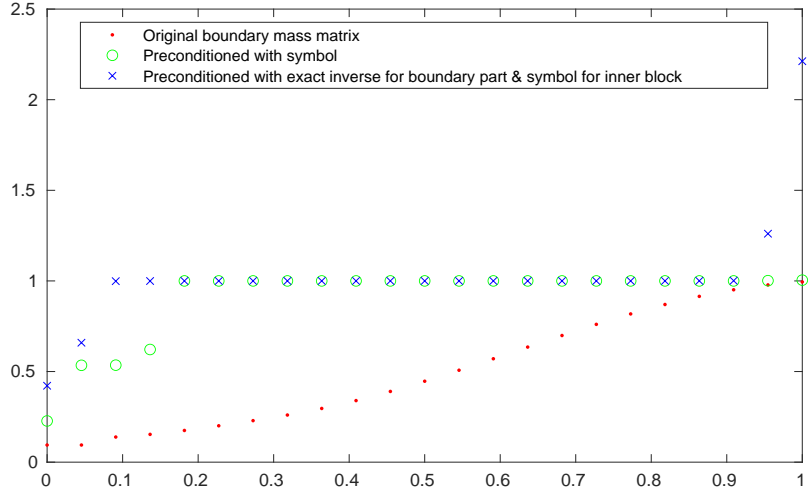}
\caption{Quadratic spline}
 \end{subfigure}
 \hfill
 \begin{subfigure}{0.8\textwidth}
  \centering
  \includegraphics[width=\linewidth]{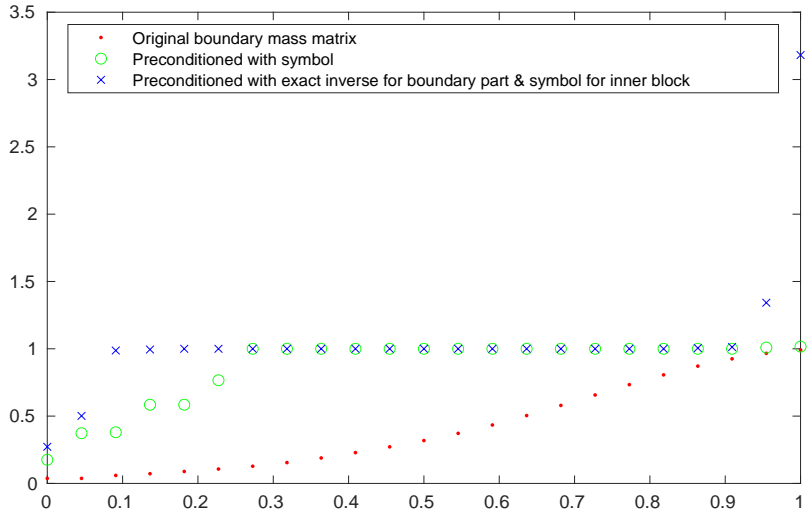}
\caption{Cubic spline}
 \end{subfigure}
\caption{Analytical eigenvalues of the 1D preconditioned mass matrix for different spline degrees. }
\label{fig:fftprec}
\end{figure}

Figure \ref{fig:fftprec} shows the eigenvalues of the 1D mass matrix with clamped splines for the two preconditioner ideas.  
We see that both methods improve the distribution of the eigenvalues of the preconditioned matrix. Since we are only interested in the ratio between the lowest and the highest eigenvalue, we decide to use the eigenvalue solver on the whole mass matrix, which has a lower condition number $\kappa=\frac{eigval_{max}}{eigval_{min}}$.

When it comes to the coordinate transformation, we have the effect that not all cells carry the same weight. The problem is especially highlighted for a mapping with singularity, which we need to lift as investigated in \cite[Sec.~3.2]{donatelli2015robustcollocation}.  Assuming for instance that the entries of $\tM$ are of minimal order $ \mathcal{O}\left(\frac{1}{\xi_1}\right)$, we want to precondition with a matrix which has entries of maximal order $ \mathcal{O}(\xi_1)$ and is close to an inverse of $\tM$.

The common choice is a Jacobi preconditioner based on the main diagonal matrix $(\mathsf{D}^2)_{ii} = \tM_{ii}$. In our case, we have used the diagonal row lumped mass $(\tM_{lump}^2)_{ii}=\sum_j \tM_{ij}$, which in many cases, yields better results. However, since the entries may become very small or even negative, we defined a positive lower limit for the sum of the row.

Finally, our preconditioner consists of the inverse of the lumped mass or the main diagonal matrix and the eigenvalue preconditioner $P_{fft}$, which is used on the system that is then almost uniform. In order not to destroy the symmetry of the matrix, we construct the preconditioner as
\begin{align*}
\mathsf{P}=\tM_{lump}^{-1} \mathsf{P}_{fft} \tM_{lump}^{-1} \text{  or  } \mathsf{P}=\mathsf{D}^{-1} \mathsf{P}_{fft} \mathsf{D}^{-1},
\end{align*} 
where the entries of the inverse matrices can easily be computed as $(\tM_{lump}^{-1})_{ii}= \frac{1}{\sqrt{\sum_j \tM_{ij}}}$ and $(\mathsf{D}^{-1})_{ii}=\frac{1}{\sqrt{\tM_{ii}}}$.

\begin{table}[htbp]
\centering
\caption{Number of iterations for the CG and PCG solver of the mass matrices for spline degree $p$. }
\begin{tabular}{|c||r|r|r|r|}
\hline
\multirow{3}{*} {Grid} &\multicolumn{4}{|c|}{ Number of iterations CG solver}  \\
\cline{2-5}
& \multicolumn{2}{|c|}{ $p_x=2$ } & \multicolumn{2}{|c|}{ $p_x=3$}  \\
\cline{2-5}
& $ N_x=8 $ & $N_x=32$ & $ N_x=8 $ & $N_x=32$ \\
\hline
Cartesian & $426$ & $451$ & $740$ & $772 $ \\ 
\hline
Distorted & $498$ & $502$ & $797$ & $818$  \\
\hline
Cylindrical & $1626$ & $2977$ & $2583$& $4807$\\
\hline 
Elliptical & $2270$ & $2881$ & $3805$& $ 4678$ \\
\hline
\hline
\multirow{3}{*} {Grid} & \multicolumn{4}{|c|}{ Number of iterations PCG solver} \\
\cline{2-5}
&  \multicolumn{2}{|c|}{ $p_x=2$} & \multicolumn{2}{|c|}{ $p_x=3$ }\\
\cline{2-5}
& $ N_x=8 $  &  $N_x =32$ & $ N_x=8 $ & $N_x=32$ \\
\hline
Cartesian &  $8$ & $8$ & $11$ & $11$\\ 
\hline
Distorted  & $18$& $18$ & $23$ & $22$\\
\hline
Cylindrical & $10$ & $10$& $13$ & $14$\\
\hline 
Elliptical & $14$ &$16$ & $21$ &$22$\\
\hline
\end{tabular}
\label{table:pcgiterations}
\end{table}

Table \ref{table:pcgiterations} shows the maximum number of iterations for the CG and PCG solver of the mass matrices. The numbers are taken from the simulation of the electromagnetic Weibel instability in Section \ref{sec:numericsboundary} on various grids with a time step of $\Delta t = 0.01$ and a solver tolerance of $10^{-13}$. Note that for a lower tolerance the CG solver would not converge without the preconditioner. It can be seen that our preconditioner largely reduces the number of iterations and the iteration count only moderately increases with increasing spline order.

\section{Numerical Experiments} \label{sec:numericsboundary}
In this section, we demonstrate our algorithms on a number of transformed domains. The computations are performed with our implementation of the GEMPIC framework within the Fortran library SeLaLib \cite{selalib}.
\subsection{Coordinate Transformations}
In this subsection, we introduce various coordinate transformations that will be used in our numerical experiments. 
First, we present a sinusoidal transformation of the form
\begin{align}\label{dist_deform}
F_{dist}(\xib) = \begin{pmatrix}
L_x\left(\xi_1 + \epsilon \sin(L_p \xi_1) \sin(2\pi \xi_2)\right)\\
L_y\left(\xi_2 + \epsilon \sin(L_p \xi_1) \sin(2\pi \xi_2)\right)\\
L_z \xi_3
\end{pmatrix}.
\end{align}
\begin{figure}[!ht]
\centering
 \begin{subfigure}{0.4\textwidth}
  \centering
  \includegraphics[width=.6\linewidth]{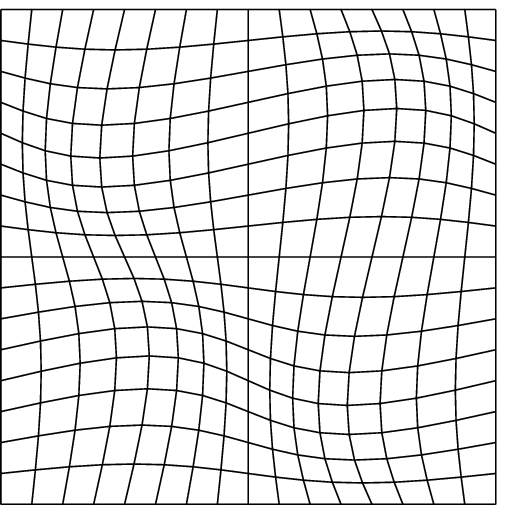}
  \caption{Square domain}
 \end{subfigure}
 \hspace{0.5cm}
 \begin{subfigure}{0.4\textwidth}
  \centering
  \includegraphics[width=.6\linewidth]{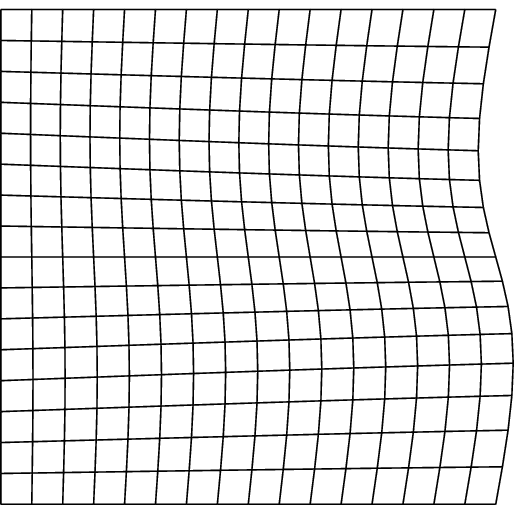}
  \caption{Deformed domain}  
 \end{subfigure}
 \caption{ Distorted grids on different domains for distortion parameter $\epsilon = 0.05$.}
  \label{fig:colboundmesh}	
\end{figure}  
 Figure \ref{fig:colboundmesh}  visualises the $(x,y)$-part of the sinusoidally distorted grid with a distortion parameter of $\epsilon=0.05$ on a square domain with $L_p=2\pi$ and on a deformed domain with $L_p= \frac{\pi}{2}$. 

Furthermore, we introduce two radial mappings, a cylindrical and an elliptical transformation,
{\small
\begin{align}\label{cylindrical}
F_{cyl}(\xib) = \begin{pmatrix}
(r_0 + L_r \xi_1) \cos(2\pi \xi_2)\\
(r_0 + L_r \xi_1) \sin(2\pi \xi_2)\\
L_z \xi_3
\end{pmatrix}, \quad
F_{ell}(\xib) =  
\begin{pmatrix}
L_r\cosh(\xi_1+r_0) \cos(2\pi \xi_2)\\
L_r\sinh(\xi_1+r_0) \sin(2\pi \xi_2)\\
L_z \xi_3
\end{pmatrix}.
\end{align}
}
\begin{figure}[!ht]
\centering
 \begin{subfigure}{0.45\textwidth}
  \centering
  \includegraphics[width=\linewidth]{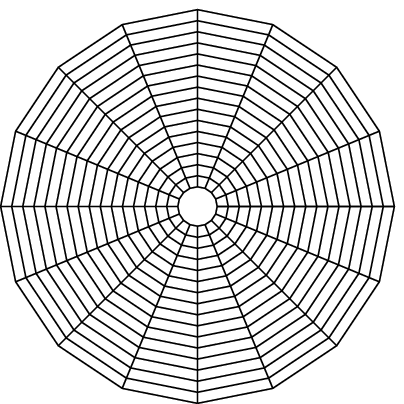}
  \caption{Cylindrical grid}
 \end{subfigure}
 \hspace{0.5cm}
 \begin{subfigure}{0.45\textwidth}
  \centering
  \includegraphics[width=\linewidth]{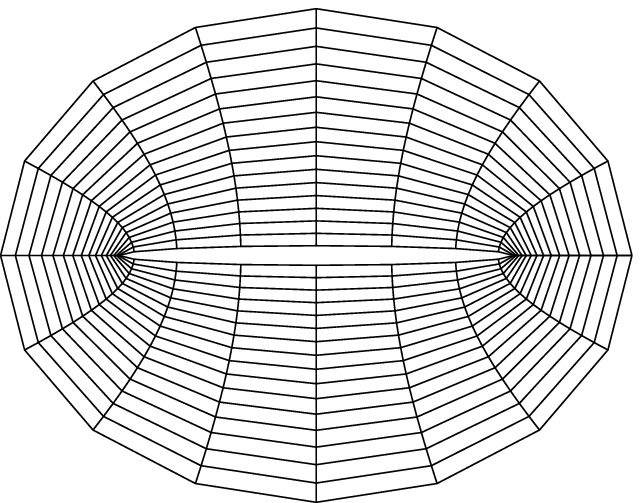}
  \caption{Elliptical grid}  
 \end{subfigure}
 \caption{ Cylindrical grid with $r_0=0.5$ and elliptical grid with $r_0=0.05$.}
 \label{fig:radialmeshes}
\end{figure}  

Figure \ref{fig:radialmeshes} visualises the $(x,y)$-part of the corresponding grids, where the pole is excluded for $r_0>0$ avoiding a singularity in the mapping.

\subsection{Test case}
Motivated by the results in \cite{chacon2019}, we test the implementation of the perfect conductor boundary and the reflecting particle boundary conditions with a simulation of the Weibel instability \cite{weibel1959spontaneously}. The instability is excited by an anisotropy in the thermal velocity and amplified with the initialisation of the corresponding component of the magnetic field. The initial distribution is given by
{\small
\begin{align*}
f(\xb,\vb,t=0)= \left(1+\alpha \cos( \kb \cdot \xb)\right)\frac{1}{(2\pi)^\frac{3}{2} v_{Tx}v_{Ty}v_{Tz}} \exp \left( -\frac{1}{2} \left( \frac{v_x^2}{v_{Tx}^2}+\frac{v_y^2}{v_{Ty}^2}+\frac{v_z^2}{v_{Tz}^2}\right) \right ),
\end{align*}
}
where  $\xb \in \left[0,\frac{2 \pi}{1.25}\right]^3, \vb \in \R^3$.
We can choose between the following three scenarios that trigger the instability:
\begin{itemize}
\item $\kb = 1.25 \hat{\textbf{e}}_x, v_{Tx}< v_{Ty,z}$, where we initialise 
\begin{align*}
B_2(x) = \beta \cos(k_x x)  \text{ or }  B_3(x) = \beta \cos(k_x x),
\end{align*}

\item $\kb = 1.25 \hat{\textbf{e}}_y, v_{Ty}< v_{Tx,z}$, where we initialise  
\begin{align*}
B_1(x,y) = \beta \cos(k_y y) \sin\left(\frac{\pi x}{L_x}\right) 
 \text{ or }  B_3(y) = \beta \cos(k_y y), 
\end{align*}

\item $\kb = 1.25 \hat{\textbf{e}}_z, v_{Tz}< v_{Tx,y}$, where we initialise  
\begin{align*}
B_1(x,z) = \beta \cos(k_z z) \sin\left(\frac{\pi x}{L_x}\right) 
 \text{ or } B_2(z) = \beta \cos(k_z z). 
\end{align*}
\end{itemize}
The remaining two components of the magnetic field are initialised with zero.

We set $v_{Ti}=\frac{0.02}{\sqrt{2}} = \frac{v_{Tj}}{\sqrt{12}}= \frac{v_{Tk}}{\sqrt{12}},$ where $(i,j,k) = (x,y,z), (y,z,x) $ or $(z,x,y)$. To start above the particle noise, we set $\beta=10^{-3}$ for the initial magnetic field. The initial electric field is calculated from Poisson's equation and the initial perturbation in space is set to zero with $\alpha=0$.

For the numerical resolution, we take $2{,}048{,}000$ particles, $8{\times}8{\times}8$ grid cells, cubic splines and a time step of $\Delta t = 0.1 $. The tolerance of the iterative solvers for the DisGradE method is set to $10^{-13}$ and the tolerance of the PCG solver for the mass matrices is set to $10^{-14}$. Note that we normalised to dimensionless quantities in terms of the electron Debye length $\lambda_{De}$ and the plasma frequency $\omega_{pe}$.

\subsection{Comparison of Perfect Boundary with Periodic Boundary Conditions}
We start by comparing simulation results with the perfect conductor boundary conditions to the simulation results with periodic boundary conditions.
\begin{figure}[!ht]
\centering
 \begin{subfigure}{0.48\textwidth}
  \centering
  \includegraphics[width=\linewidth]{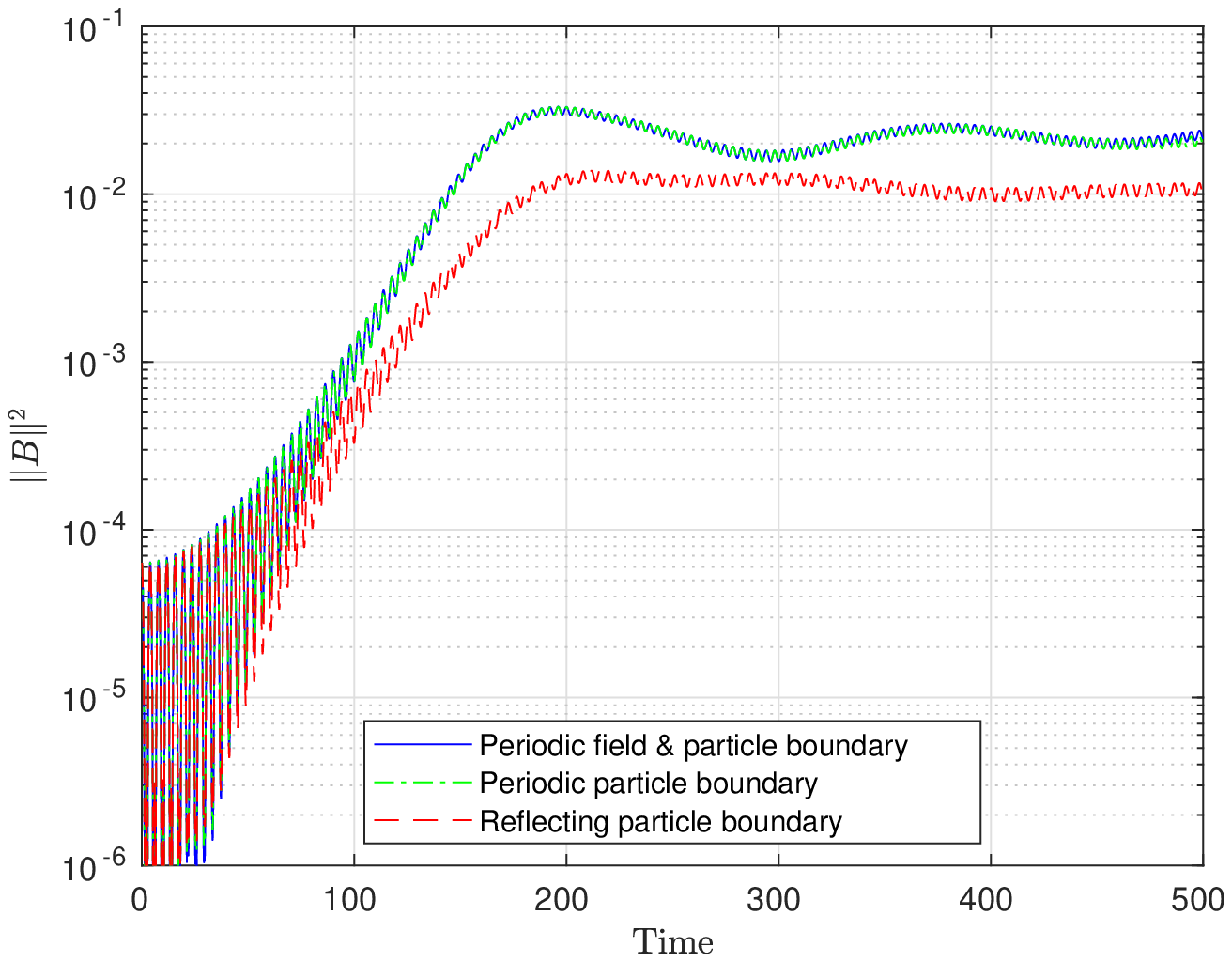}
\caption{Initialisation of $B_2(x)= \beta \cos(k_x x)$}
\label{fig:Bcartesian_x_By} 
 \end{subfigure}
 \hfill
 \begin{subfigure}{0.48\textwidth}
  \centering
  \includegraphics[width=\linewidth]{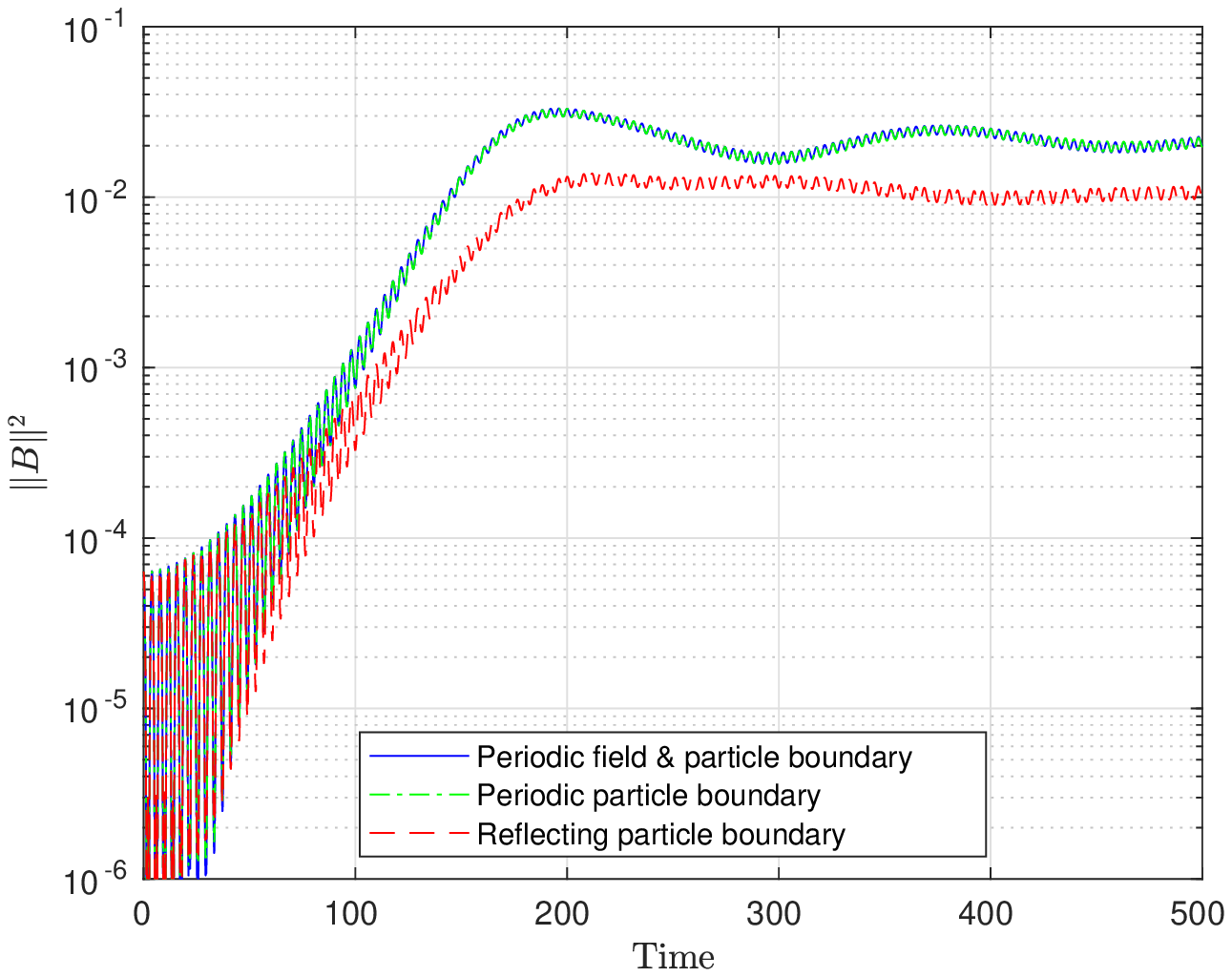}
\caption{Initialisation of $B_3(x)= \beta \cos(k_x x)$}
\label{fig:Bcartesian_x_Bz} 
 \end{subfigure}
 \caption{Weibel instability with $\kb = 1.25 \hat{\textbf{e}}_x$: Magnetic field energy for HS with time step $\Delta t = 0.1$ on a Cartesian grid with different boundary conditions.} 
 \label{fig:Bcartesian_x}
\end{figure}

\begin{figure}[!ht]
\centering
 \begin{subfigure}{0.48\textwidth}
  \centering
  \includegraphics[width=\linewidth]{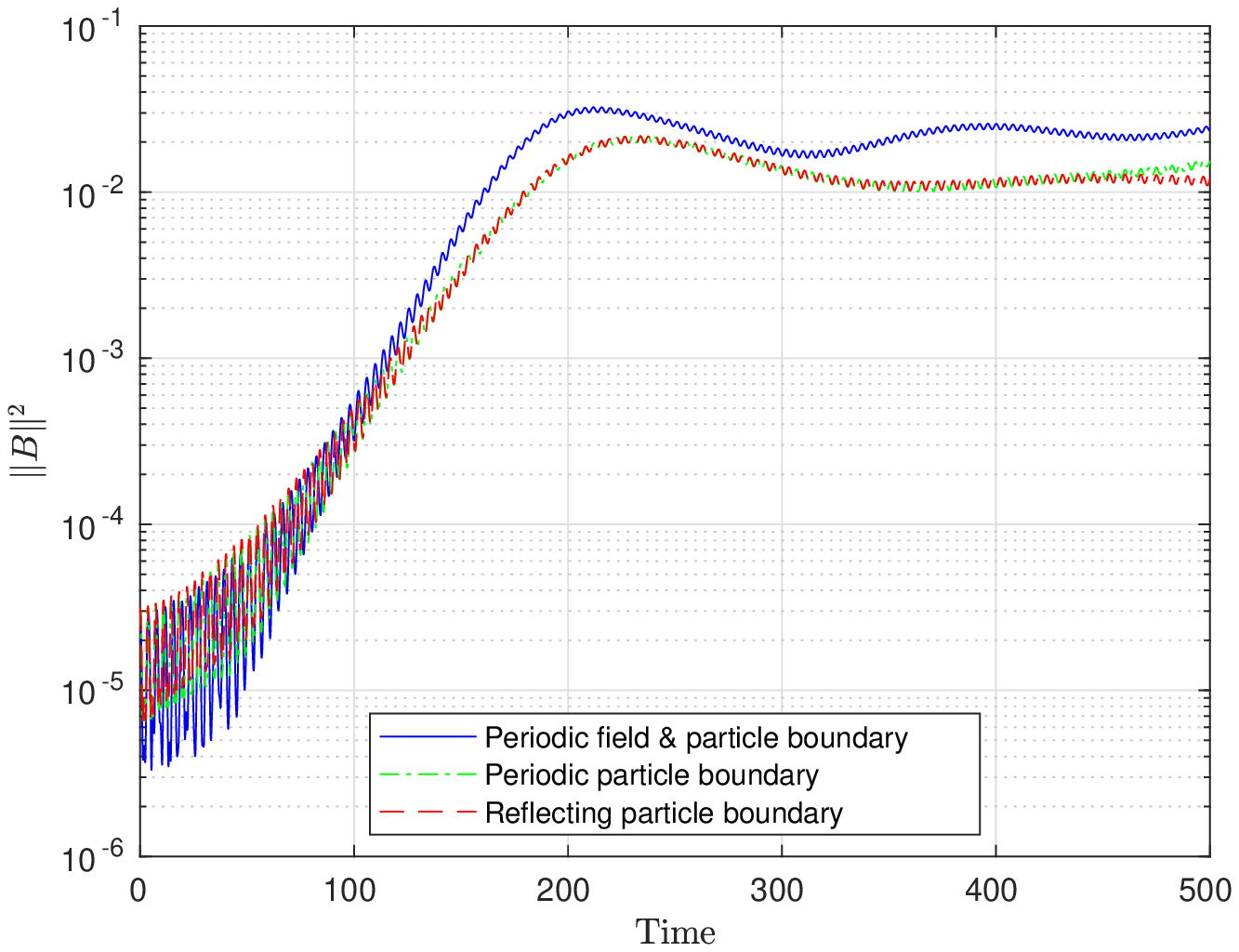}
\caption{Initialisation of $B_1=\beta \sin\left(\frac{\pi x}{L_x}\right) \cos(k_y y) $}
\label{fig:Bcartesian_y_Bx} 
 \end{subfigure}
 \hfill
 \begin{subfigure}{0.48\textwidth}
  \centering
  \includegraphics[width=\linewidth]{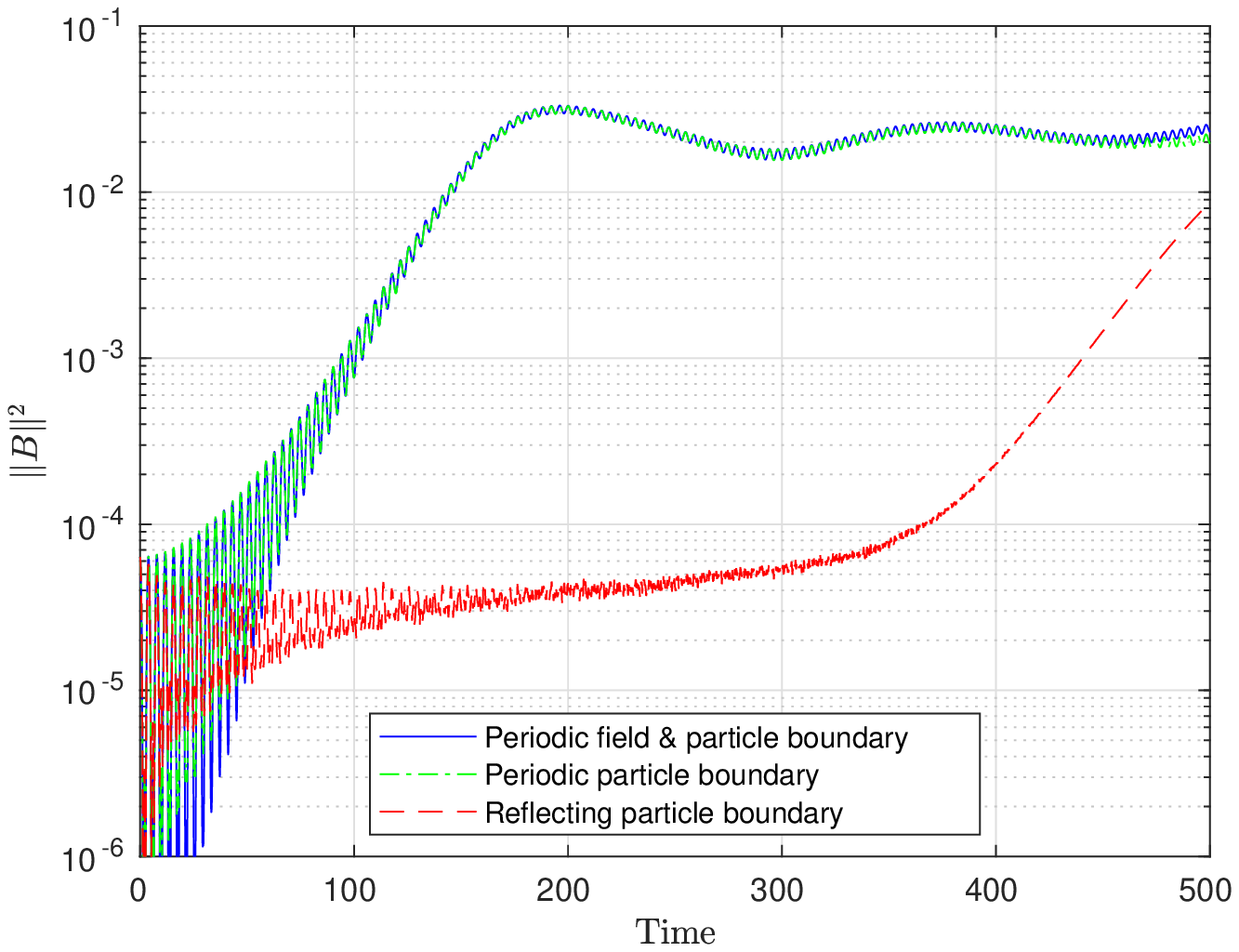}
\caption{Initialisation of $B_3(y)= \beta \cos(k_y y)$}
\label{fig:Bcartesian_y_Bz} 
 \end{subfigure}
 \caption{Weibel instability with $\kb = 1.25 \hat{\textbf{e}}_y$: Magnetic field energy for HS with time step $\Delta t = 0.1$ on a Cartesian grid with different boundary conditions.} 
 \label{fig:Bcartesian_y}
\end{figure}

\begin{figure}[!ht]
\centering
 \begin{subfigure}{0.48\textwidth}
  \centering
  \includegraphics[width=\linewidth]{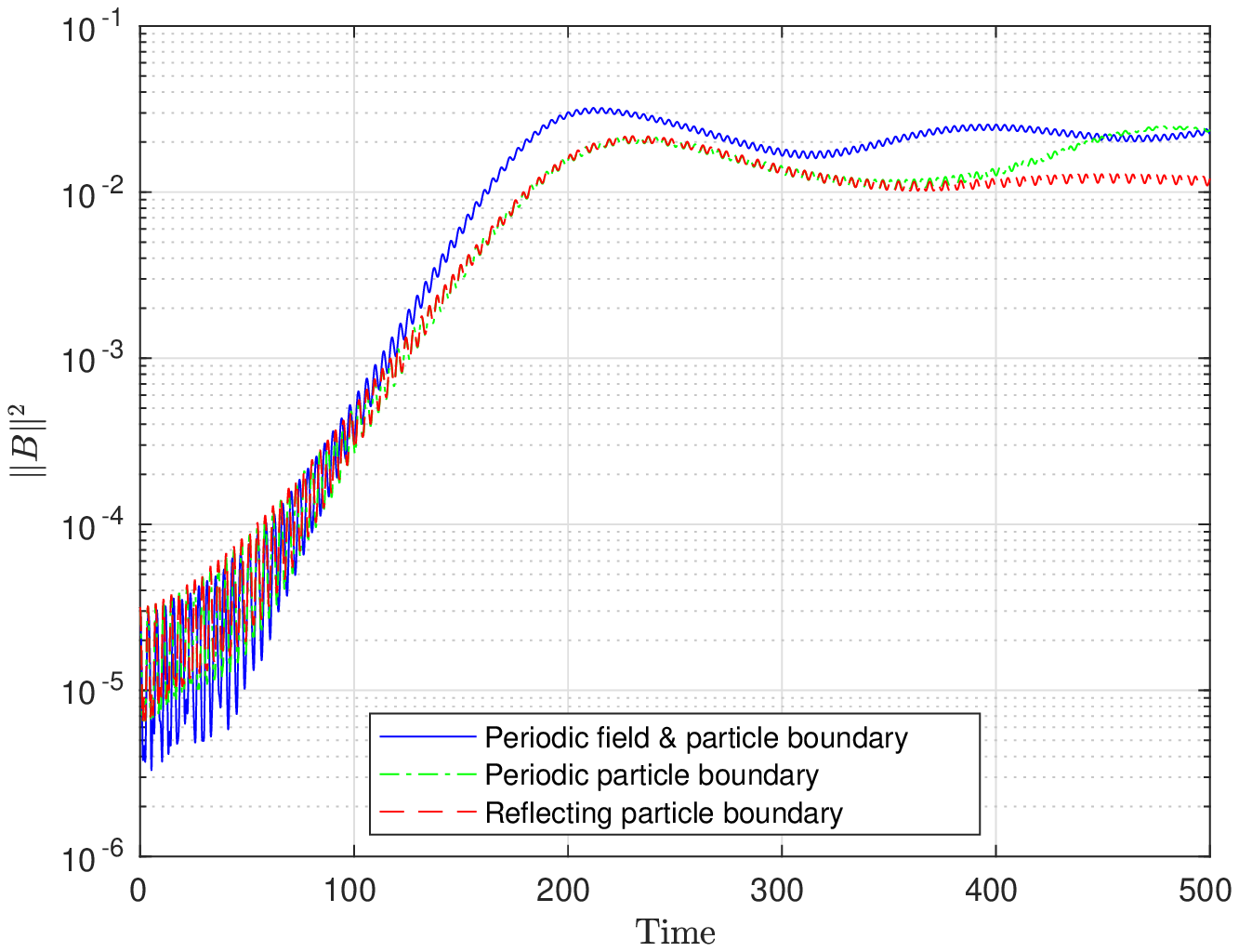}
\caption{Initialisation of $B_1=\beta \sin\left(\frac{\pi x}{L_x}\right) \cos(k_z z)$}
\label{fig:Bcartesian_z_Bx} 
 \end{subfigure}
 \hfill
 \begin{subfigure}{0.48\textwidth}
  \centering
  \includegraphics[width=\linewidth]{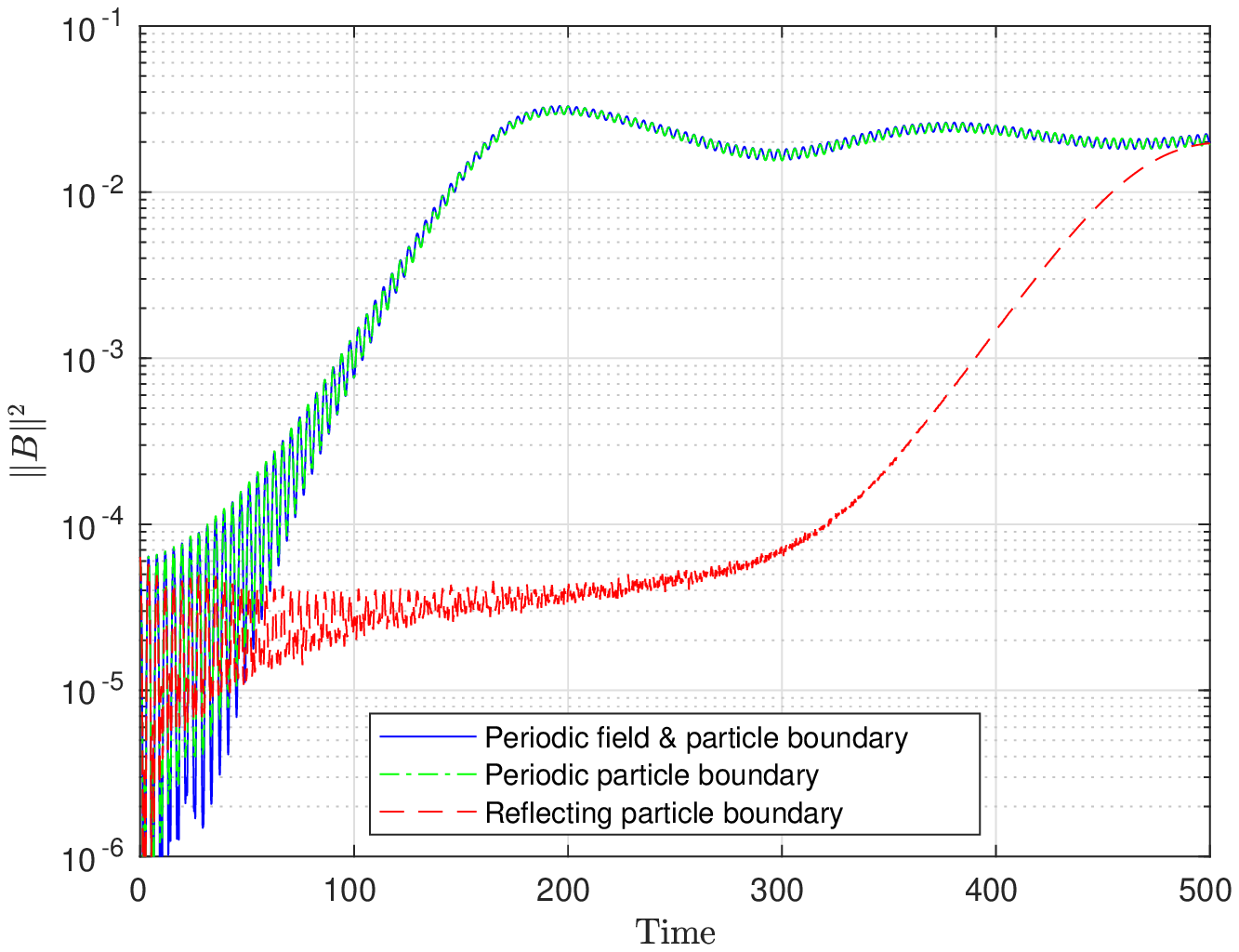}
 \caption{Initialisation of $B_2(z)= \beta \cos(k_z z)$}
\label{fig:Bcartesian_z_By} 
 \end{subfigure}
 \caption{Weibel instability with $\kb = 1.25 \hat{\textbf{e}}_z$: Magnetic field energy for HS with time step $\Delta t = 0.1$ on a Cartesian grid with different boundary conditions.} 
  \label{fig:Bcartesian_z}
\end{figure}

\begin{figure}[!ht]
\centering
 \begin{subfigure}{0.48\textwidth}
  \centering
  \includegraphics[width=\linewidth]{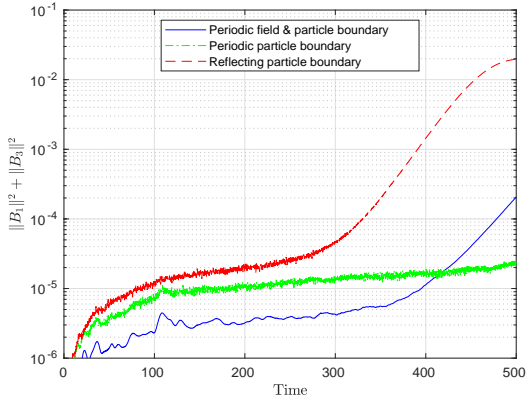}
\caption{ Sum of first and third component of the magnetic field }
\label{fig:Bcartesian_z_By_comp13} 
 \end{subfigure}
 \hfill
 \begin{subfigure}{0.48\textwidth}
  \centering
  \includegraphics[width=\linewidth]{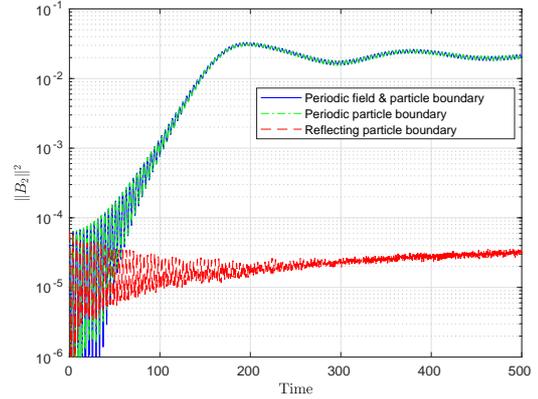}
 \caption{Second component of the magnetic field}
\label{fig:Bcartesian_z_By_comp2} 
 \end{subfigure}
 \caption{Weibel instability with $\kb = 1.25 \hat{\textbf{e}}_z$ and initialisation of $B_2$: Different components of the magnetic field energy for HS with time step $\Delta t = 0.1$ on a Cartesian grid with different boundary conditions.} 
\end{figure}

Figures \ref{fig:Bcartesian_x}, \ref{fig:Bcartesian_y} and \ref{fig:Bcartesian_z} show the magnetic field energy on a Cartesian grid for the three different choices of the wave vector $\kb=1.25 \hat{\textbf{e}}_{i}, i\in\{x,y,z\}$. Additionally, there are two different components of the magnetic field in each scenario that can be initialised to start the Weibel instability right away. Since the results coincide for the different time integrators, we show only the simulation of the semi-explicit HS scheme with a time step of $\Delta t=0.1$. Displayed are the simulation results for the perfect conductor boundary conditions with periodic or reflecting particle boundaries and a simulation with periodic field and particle boundary conditions for comparison. 

When initialising the second and third component of the magnetic field, we see in Figures \ref{fig:Bcartesian_x}, \ref{fig:Bcartesian_y_Bz} and \ref{fig:Bcartesian_z_By} that the growth of the magnetic field in the simulation with the periodic particle boundary coincides with the growth of the magnetic field in the simulation with periodic field and particle boundary conditions. However, the reflecting particle boundary leads to a lower growth rate and delays the beginning of the growth in the magnetic field in the two cases displayed in Figures \ref{fig:Bcartesian_y_Bz} and \ref{fig:Bcartesian_z_By}. We will try to explain this behaviour exemplarily for the latter case. 
In Figure \ref{fig:Bcartesian_z_By_comp2}, we see that the second component of the magnetic field, which was initialised to start the instability right away, shows no signs of the expected growth for the reflecting boundary conditions. This component is updated as $\partial_t {B_2}=\partial_z E_1 - \partial_x E_3$ in Faraday's law \eqref{Faraday}. Normally, the anisotropy in the velocity causes a discrepancy between the partial derivatives of these two components of the electric field resulting in the growth of the magnetic field. However, the reflecting boundary conditions prevent a current through the boundary, which seems to level the values of the two components of the electric field.
Therefore, the growth of the  second component of the magnetic field is suppressed. Nevertheless, Figure \ref{fig:Bcartesian_z_By_comp13} shows that the instability arises in the first and third component of the magnetic field. 
However, the growth is delayed, since these two components were not initialised.
 
On a Cartesian domain, the physical and the logical fields only differ by a constant scaling. Therefore, the perfect conductor boundary conditions on the logical fields ensure $E_2,E_3$ and $B_1$ to be zero at the boundary. This is why, we initialise the first component of the magnetic field with $B_1(\xb) = \beta \sin\left(\frac{\pi x}{L}\right) \cos(k_y y + k_z z)$. In this case, the growth rates of the periodic and reflecting particle boundary conditions coincide, which are lower than the one with periodic field and particle boundary conditions as can be seen in Figures \ref{fig:Bcartesian_y_Bx} and \ref{fig:Bcartesian_z_Bx}.

\subsection{Domain Deformation}
Next, we are interested in the behaviour on a deformed mapped grid. Therefore, we apply the sinusoidal coordinate transformation \eqref{dist_deform} with $L_p=\frac{\pi}{2}$. Again the results for the different time integrators coincide so that we only show the results of the semi-explicit HS scheme with a time step of $\Delta t = 0.1$. 

\begin{figure}[!ht]
\centering
 \begin{subfigure}{0.48\textwidth}
  \centering
  \includegraphics[width=\linewidth]{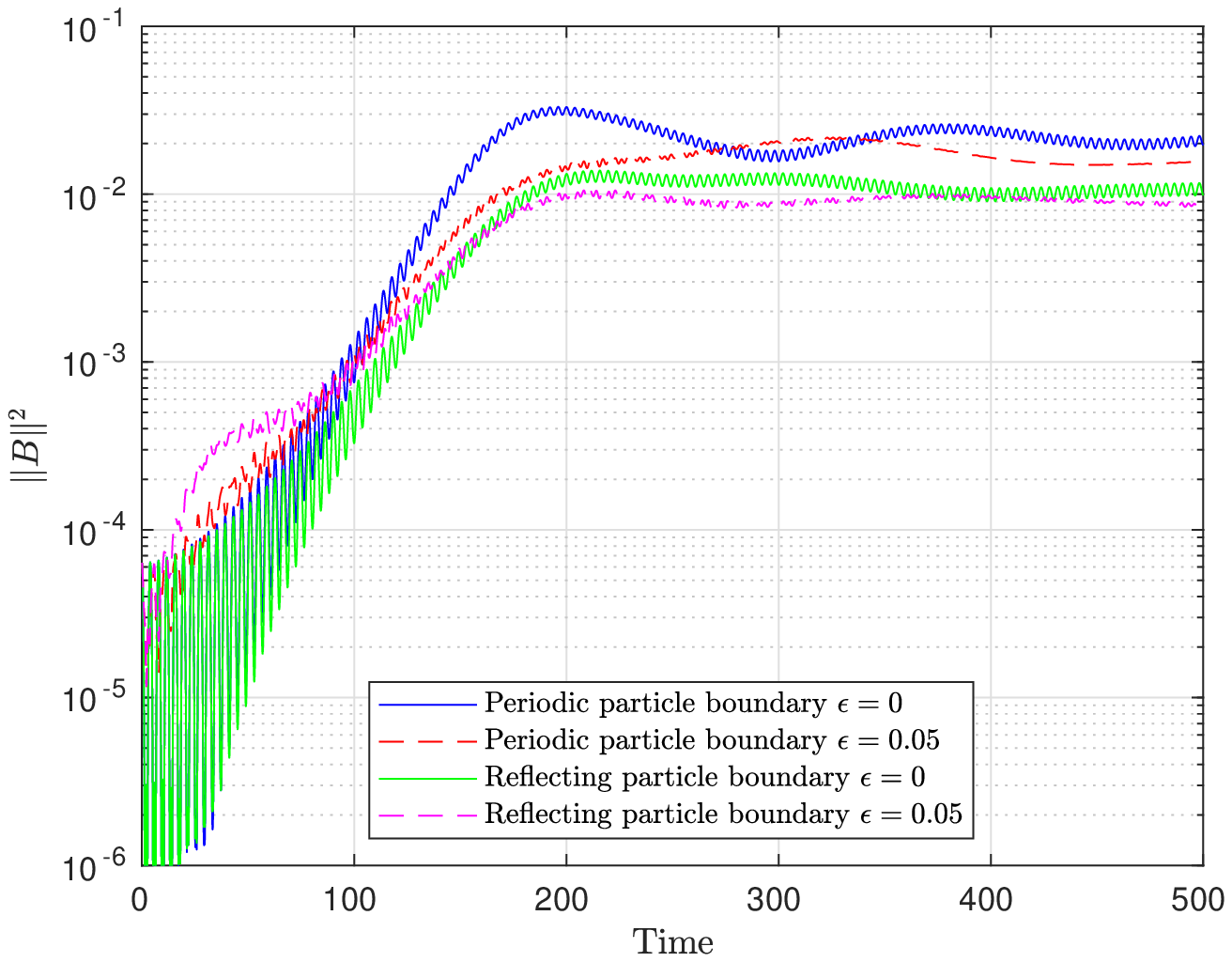}
\caption{Initialisation of $B_2(x)= \beta \cos(k_x x)$}
\label{fig:Btrafo_x_By} 
 \end{subfigure}
 \hfill
 \begin{subfigure}{0.48\textwidth}
  \centering
  \includegraphics[width=\linewidth]{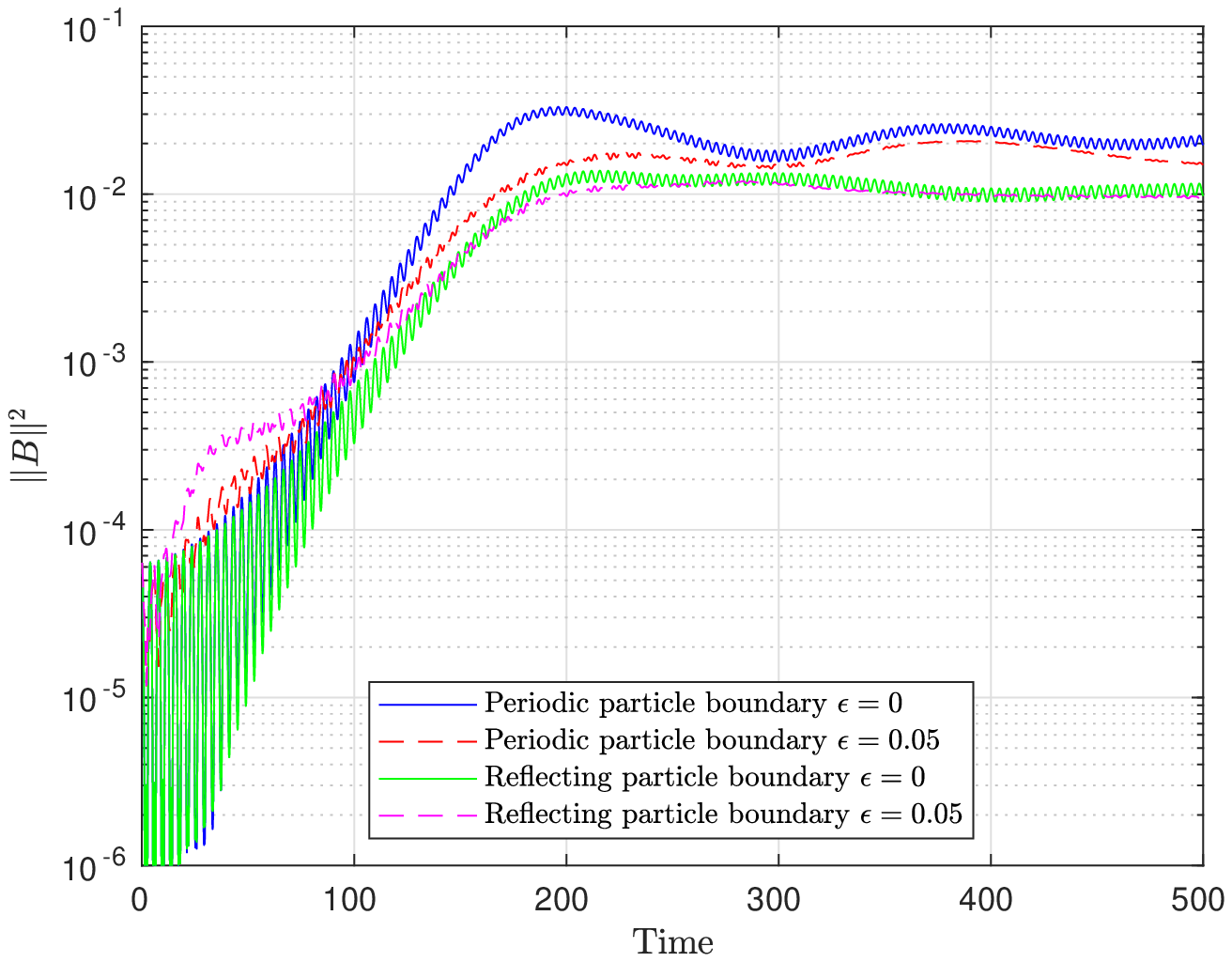}
\caption{Initialisation of $B_3(x)= \beta \cos(k_x x)$}
\label{fig:Btrafo_x_Bz} 
 \end{subfigure}
 \caption{Weibel instability with $\kb = 1.25 \hat{\textbf{e}}_x$: Magnetic field energy for HS with time step $\Delta t = 0.1$ on a distorted grid with distortion parameters $\epsilon=0,0.05$ for the coordinate transformation.} 
 \label{fig:Btrafo_x}
\end{figure}

\begin{figure}[!ht]
\centering
 \begin{subfigure}{0.48\textwidth}
  \centering
  \includegraphics[width=\linewidth]{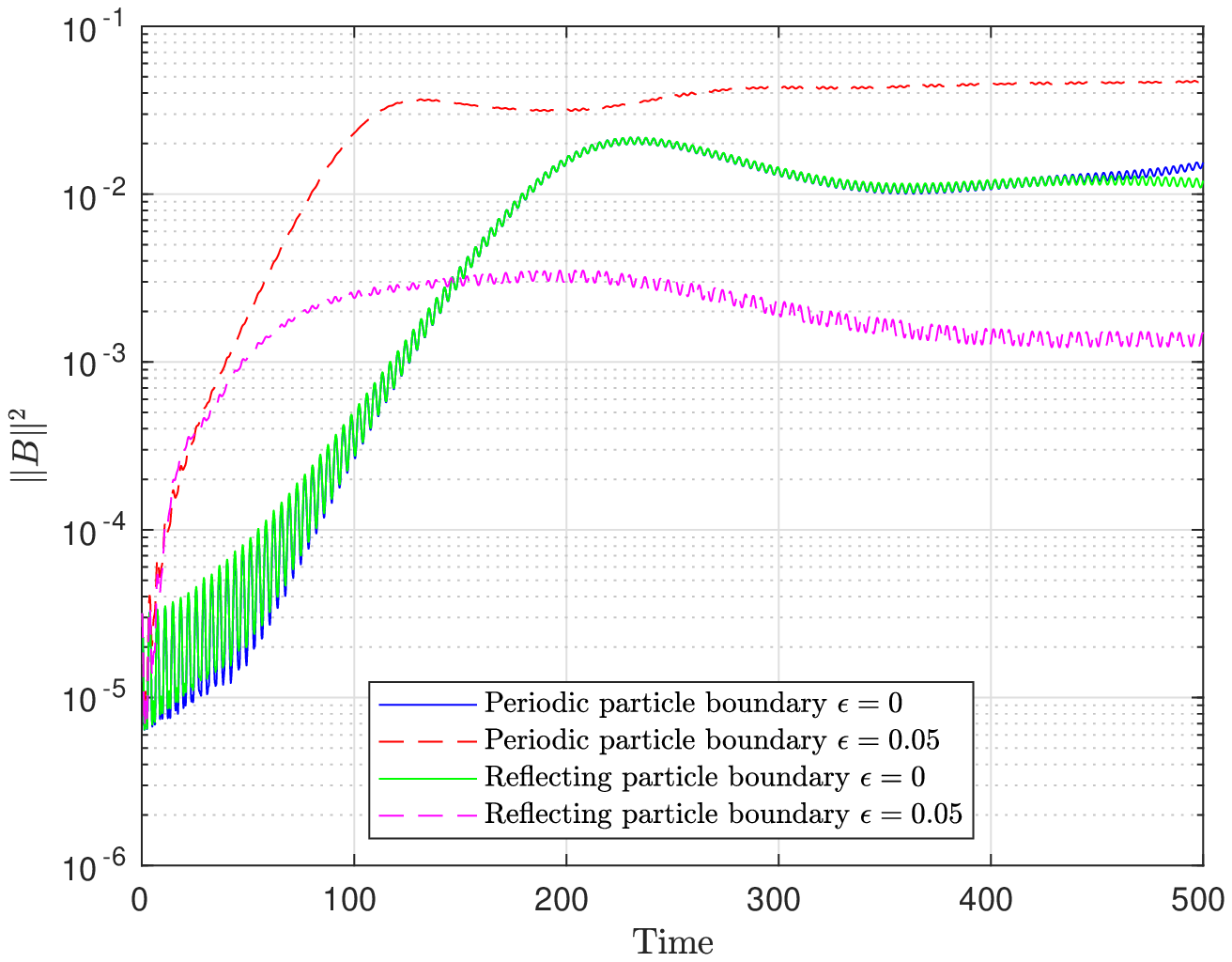}
\caption{Initialisation of $B_1=\beta \sin\left(\frac{\pi x}{L_x}\right) \cos(k_y y) $}
\label{fig:Btrafo_y_Bx} 
 \end{subfigure}
 \hfill
 \begin{subfigure}{0.48\textwidth}
  \centering
  \includegraphics[width=\linewidth]{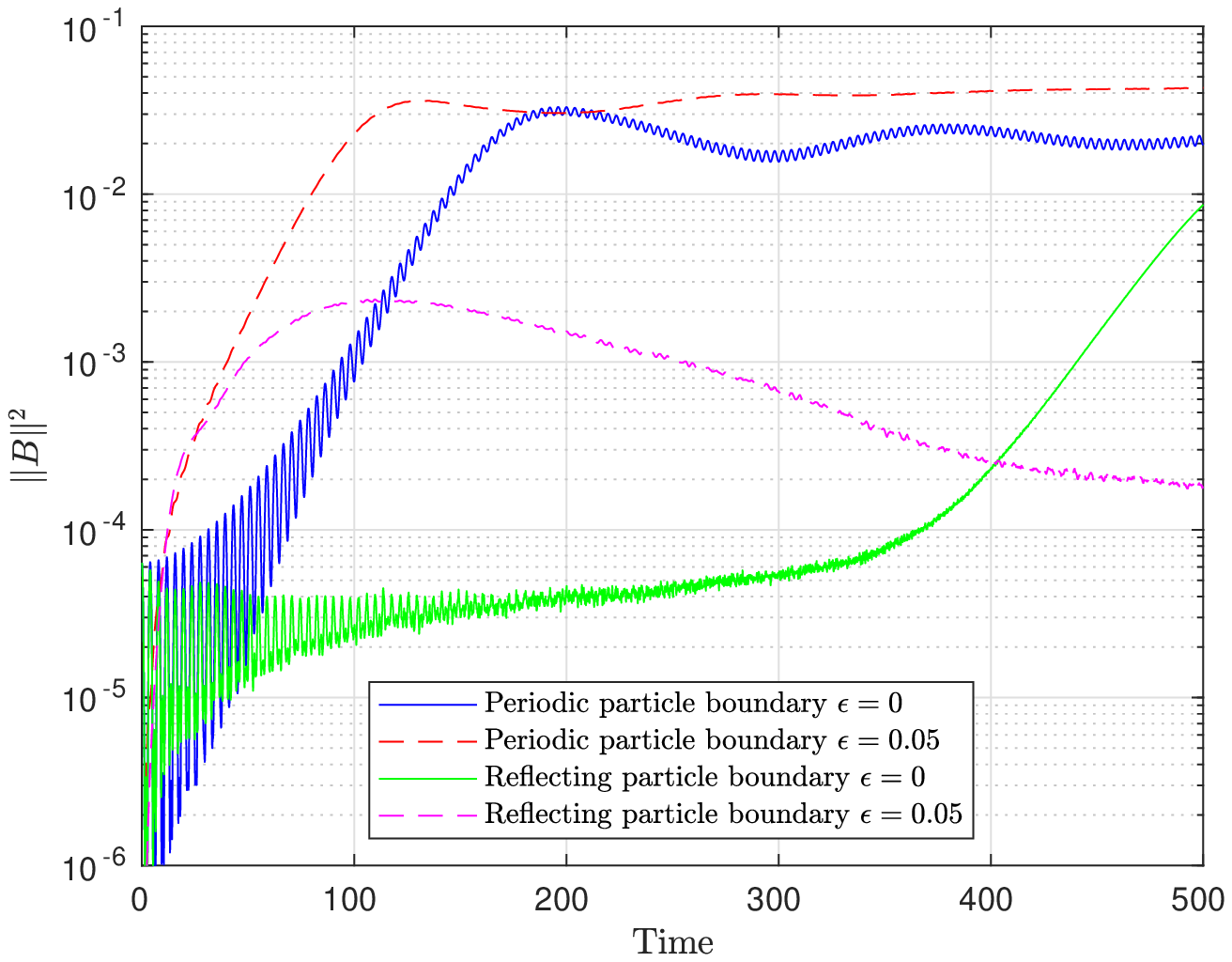}
\caption{Initialisation of $B_3(y)= \beta \cos(k_y y)$}
\label{fig:Btrafo_y_Bz} 
 \end{subfigure}
 \caption{Weibel instability with $\kb = 1.25 \hat{\textbf{e}}_y$: Magnetic field energy for HS with time step $\Delta t = 0.1$ on a distorted grid with distortion parameters $\epsilon=0,0.05$ for the coordinate transformation.} 
 \label{fig:Btrafo_y}
\end{figure}

\begin{figure}[!ht]
\centering
 \begin{subfigure}{0.48\textwidth}
  \centering
  \includegraphics[width=\linewidth]{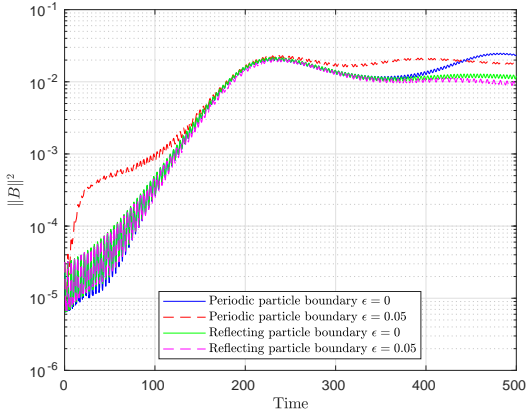}
\caption{Initialisation of $B_1=\beta \sin\left(\frac{\pi x}{L_x}\right) \cos(k_z z)$}
\label{fig:Btrafo_z_Bx} 
 \end{subfigure}
 \hfill
 \begin{subfigure}{0.48\textwidth}
  \centering
  \includegraphics[width=\linewidth]{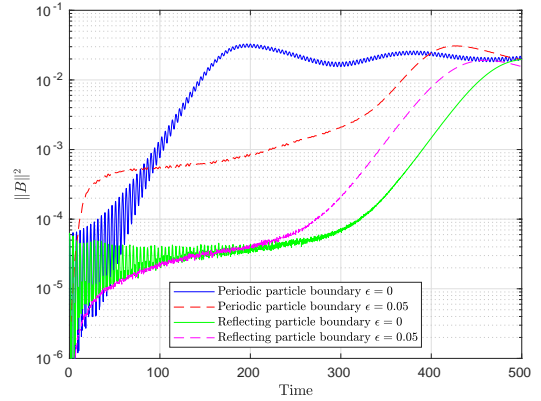}
 \caption{Initialisation of $B_2(z)= \beta \cos(k_z z)$}
\label{fig:Btrafo_z_By} 
 \end{subfigure}
 \caption{Weibel instability with $\kb = 1.25 \hat{\textbf{e}}_z$: Magnetic field energy for HS with time step $\Delta t = 0.1$ on a distorted grid with distortion parameters $\epsilon=0,0.05$ for the coordinate transformation.} 
  \label{fig:Btrafo_z}
\end{figure}
Figures \ref{fig:Btrafo_x}, \ref{fig:Btrafo_y} and \ref{fig:Btrafo_z} show the magnetic field energy on the distorted grid with distortion parameters $\epsilon=0$, which is equal to the Cartesian grid, and $\epsilon=0.05$ for the three different choices of the wave vector $\kb=1.25 \hat{\textbf{e}}_{i}, i\in\{x,y,z\}$. Displayed are the simulation results for the perfect conductor boundary conditions with periodic and with reflecting particle boundaries. Figure \ref{fig:Btrafo_x} shows that the domain deformation effects the growth of the magnetic field especially in the beginning. In Figure \ref{fig:Btrafo_y}, we see that the coordinate transformation couples the coordinate directions, which leads to the initial steep growth in the magnetic field. In Figure \ref{fig:Btrafo_z_By}, the reflecting particle boundary still delays the growth of the magnetic field but it starts earlier on the deformed grid. For the periodic particle boundary, the domain deforming transformation leads to a steeper growth in the beginning, which can be seen in Figures \ref{fig:Btrafo_z_Bx} and  \ref{fig:Btrafo_z_By}. Figure \ref{fig:Btrafo_z_Bx} shows only minor differences for the simulation with a reflecting particle boundary on the Cartesian or on the deformed grid.

Since the coordinate transformation mixes the first two directions of the logical coordinates in the computation of $x$ and $y$, $z$ is the only periodic coordinate direction left. Therefore, we focus on the scenario with the wave vector $\kb=1.25\hat{\textbf{e}}_z$.
Since the instability is delayed under coordinate transformation when we initialise the second component of the magnetic field, we look at the case, where the first component of the magnetic field is initialised with $B_1(x,z)=\beta \sin\left(\frac{\pi x}{L_x}\right) \cos(k_z z)$.  

As a next step, we investigate the effect of the distortion parameter $\epsilon$ and compare the distorted grid \eqref{dist_deform} on a square mapped grid with $L_p=2\pi$ to the deformed mapped grid with $L_p=\frac{\pi}{2}$.
\begin{figure}[!ht]
\centering
 \begin{subfigure}{0.48\textwidth}
  \centering
  \includegraphics[width=\linewidth]{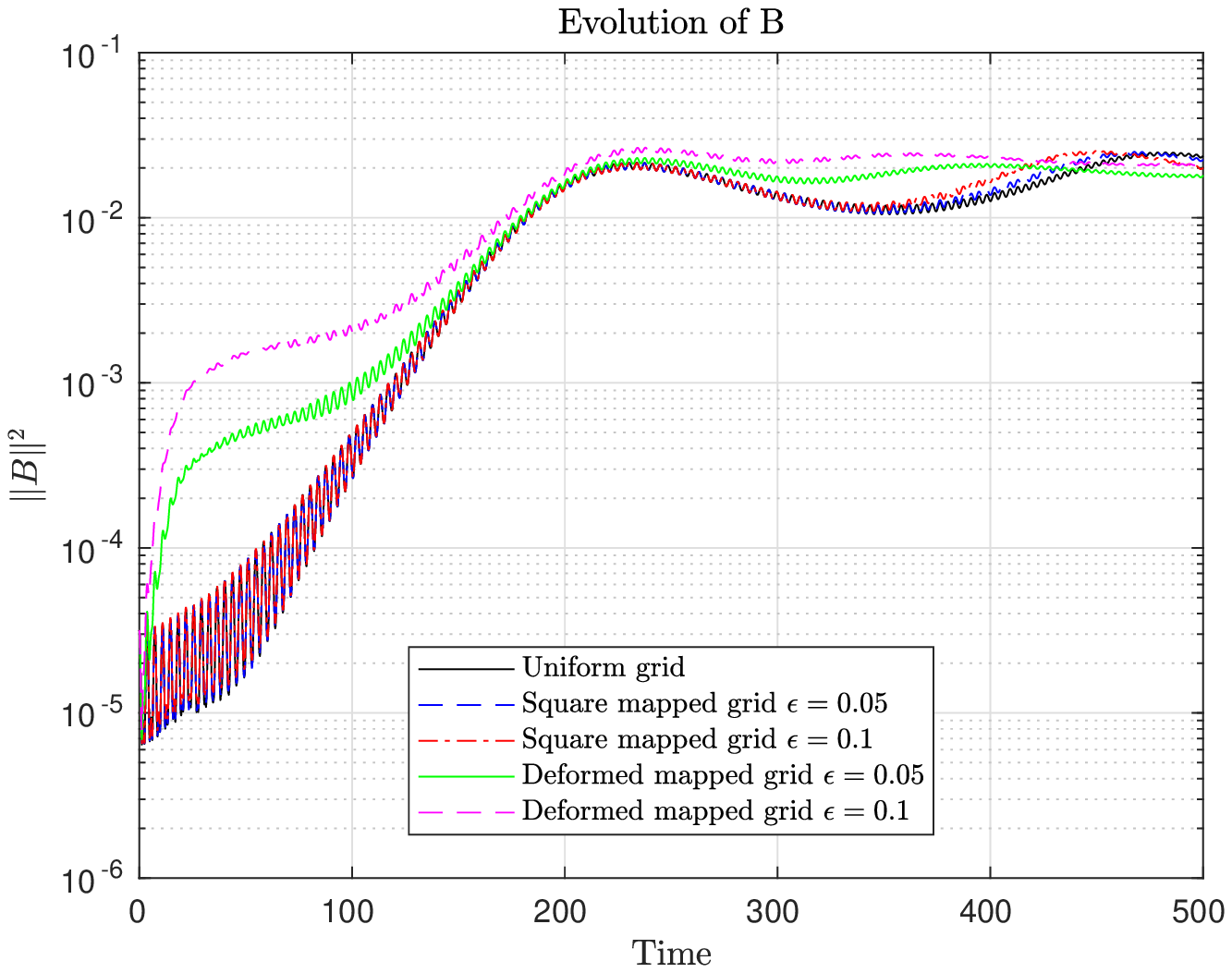}
\caption{Periodic particle boundary}
 \end{subfigure}
 \hfill
 \begin{subfigure}{0.48\textwidth}
  \centering
  \includegraphics[width=\linewidth]{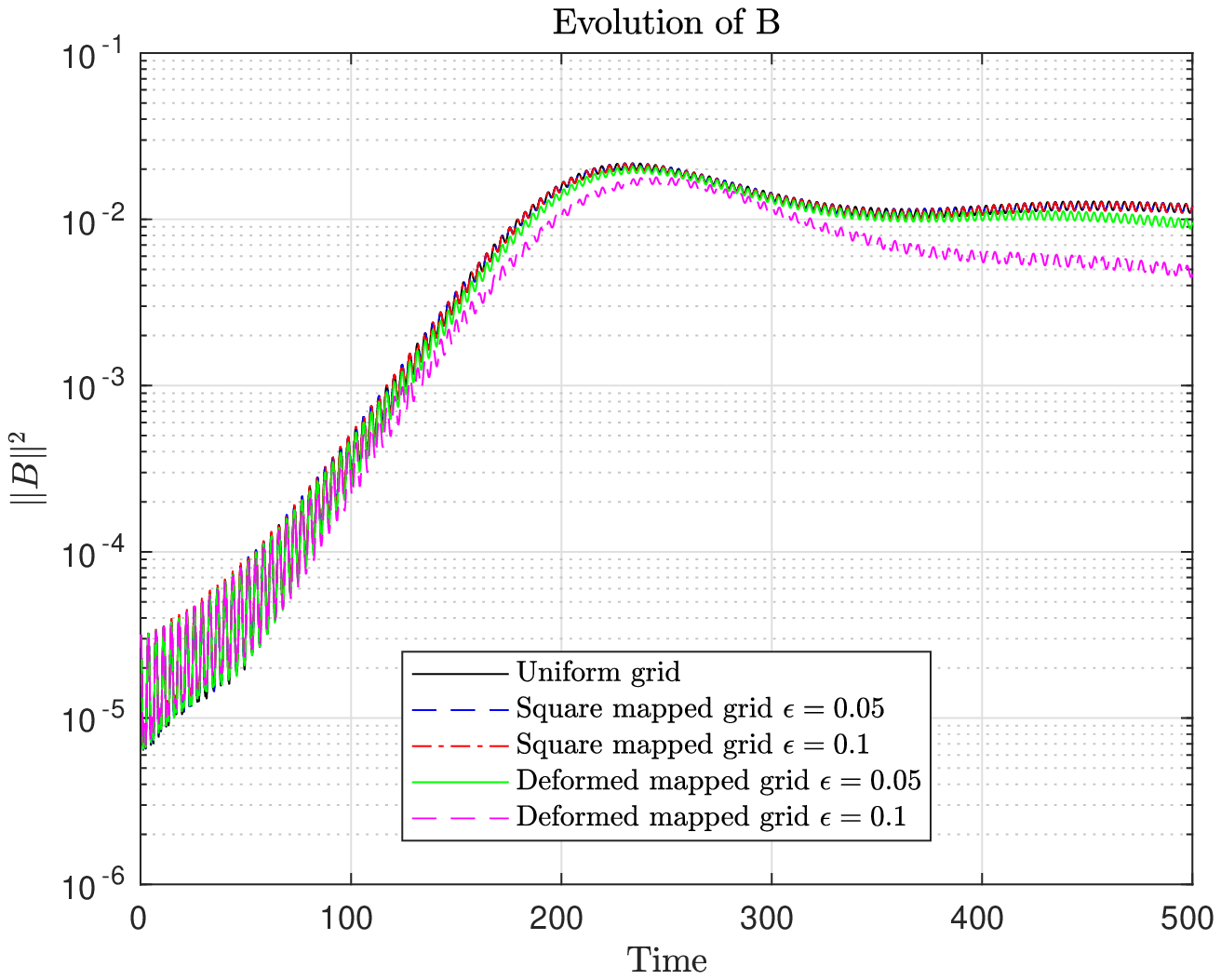}
\caption{Reflecting particle boundary}
 \end{subfigure}
 \caption{Weibel instability with $\kb = 1.25 \hat{\textbf{e}}_z$: Magnetic field energy for HS with time step $\Delta t = 0.1$ on a distorted grid with different values of the distortion parameter $\epsilon$ for the coordinate transformation.} 
 \label{fig:weibel_dist}
\end{figure}

In Figure \ref{fig:weibel_dist}, the magnetic field energy on the distorted grid is displayed for different values of the distortion parameter $\epsilon$. We present only the simulation results of the HS scheme with a time step of $\Delta t=0.1$ because all the schemes show the same behaviour of the magnetic field. We see that for the square mapped grid, the coordinate transformation does not change the growth of the magnetic field even for a high distortion parameter of $\epsilon=0.1$. However, for the domain deforming coordinate transformation the higher distortion parameter $\epsilon=0.1$ leads to a slightly different growth of the magnetic field. Additionally, after the saturation all the curves drift apart, especially with the periodic particle boundary conditions.

\subsection{Radial Grids}
Let us extend the Weibel instability to the radial grids given by the coordinate transformations in \eqref{cylindrical}, where we set $r_0=0.01$ to prevent a singularity at $\xi_1=0$ and choose $L_r=L-r_0$. From now on, we consider the particles to be reflected at the boundary, since this is an expected physical behaviour. Additionally, we double the resolution in the radial and angular directions to $16{\times}16{\times}8$ grid cells in order to see the impact of the time step constraints on the semi-explicit integrators.

\begin{figure}[!ht]
\centering
 \begin{subfigure}{0.48\textwidth}
  \centering
  \includegraphics[width=\linewidth]{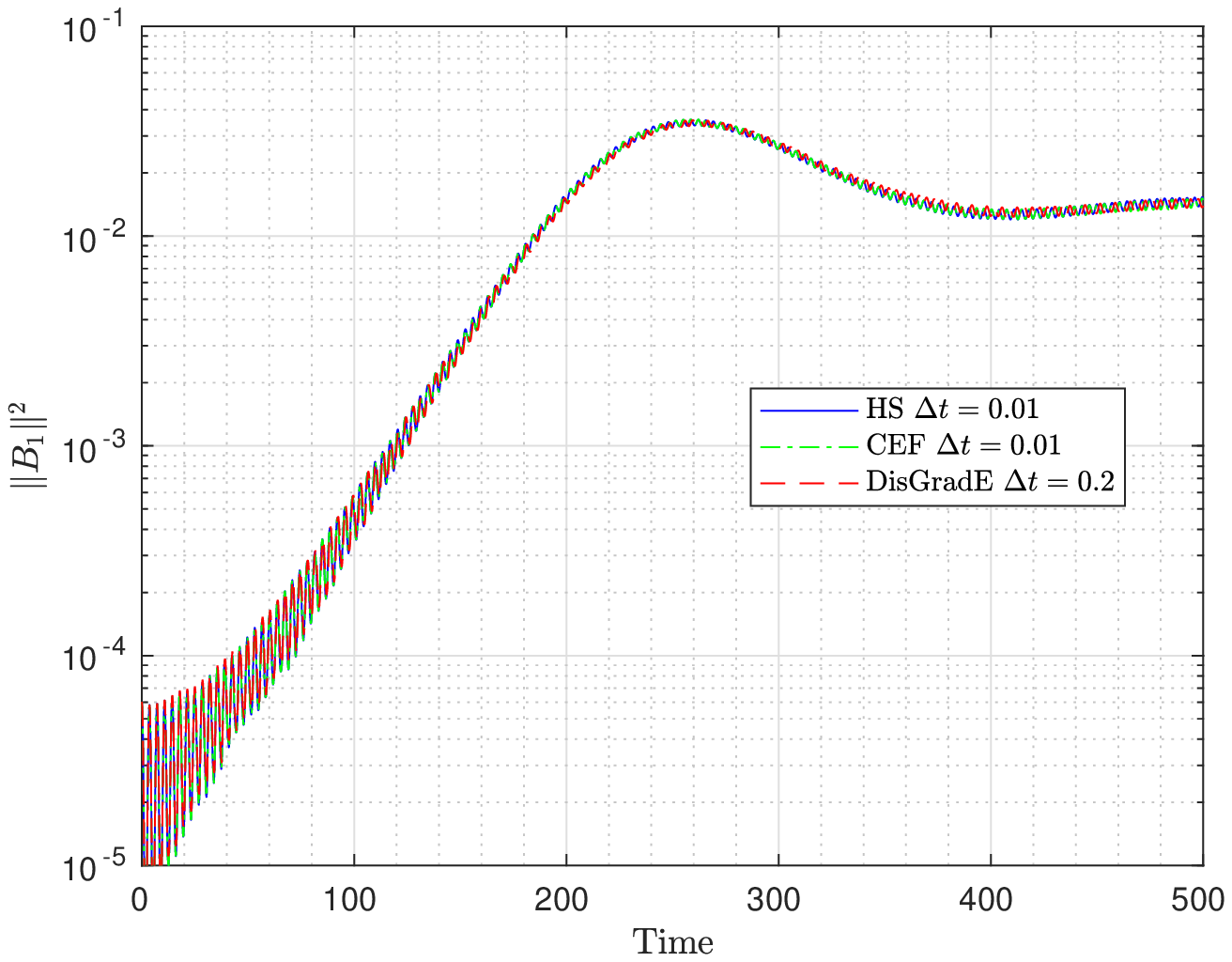}
\caption{Cylindrical grid}
 \end{subfigure}
 \hfill
 \begin{subfigure}{0.48\textwidth}
  \centering
  \includegraphics[width=\linewidth]{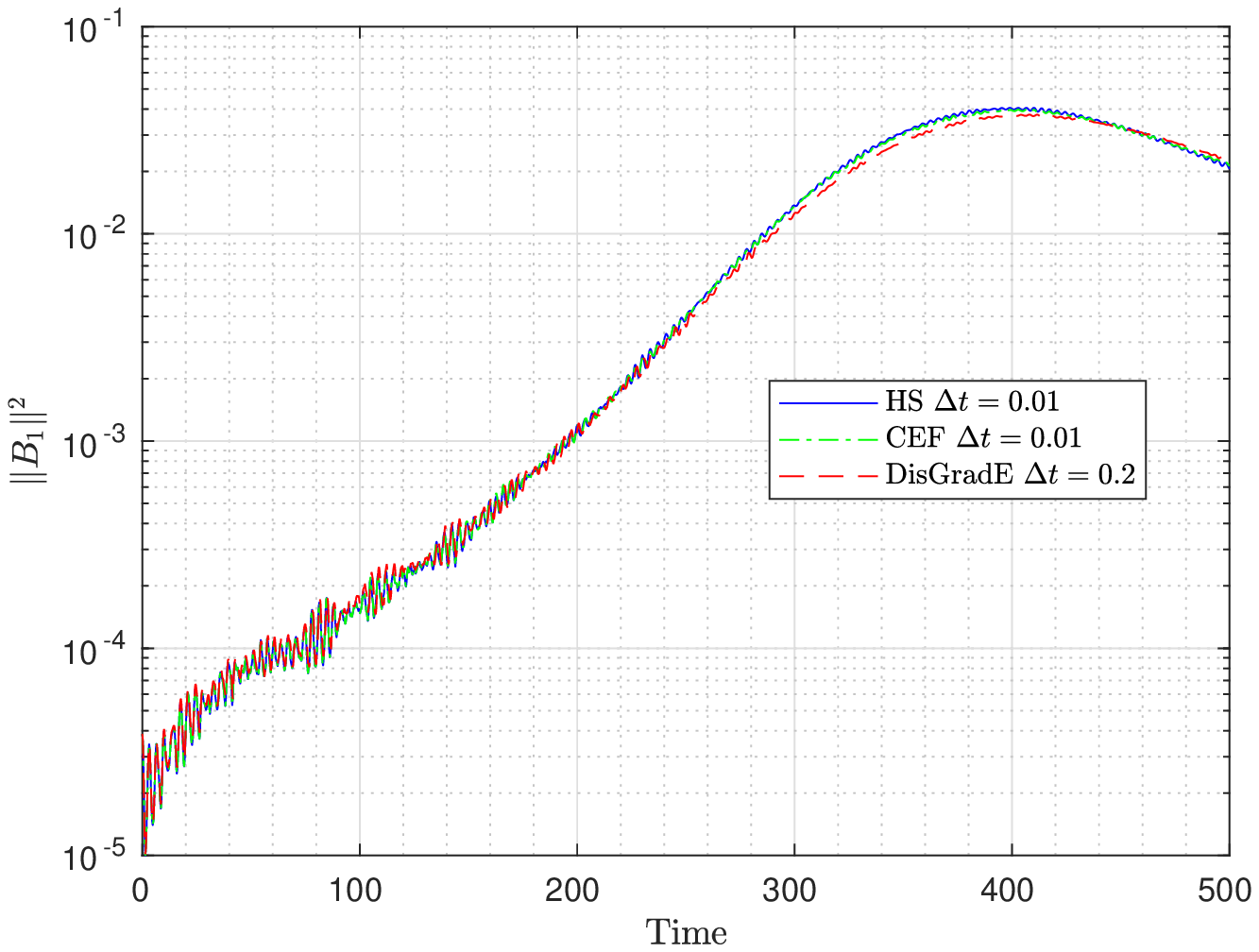}
\caption{Elliptical grid}
 \end{subfigure}
 \caption{Weibel instability with $\kb = 1.25 \hat{\textbf{e}}_z$: First component of the magnetic field energy for various integrators with different time steps on cylindrical and elliptical grids with $r_0=0.01$.} 
 \label{fig:weibel_radial}
\end{figure}
Figure \ref{fig:weibel_radial} shows the first component of the magnetic field energy on a cylindrical and an elliptical grid with reflecting particle boundary conditions. Due to the stability constraints, we have to choose a lower time step of $\Delta t = 0.01$ for the semi-explicit schemes whereas the semi-implicit method shows comparable results with a time step of $\Delta t = 0.2$ or higher. The stability constraints arise from the smaller cells near the pole, where the semi-explicit schemes have problems, when particles cross too many cells in one time step.
Although the semi-implicit DisGradE method takes roughly about ten times longer for a time step than the semi-explicit HS or CEF schemes, it seems that it is more suitable to this type of domain deforming mappings, since its simulation results show the same behaviour of the magnetic field as the semi-explicit schemes while requiring at most one-twentieth of the time steps.

\begin{figure}[!ht]
\centering
\includegraphics[width=0.85\linewidth]{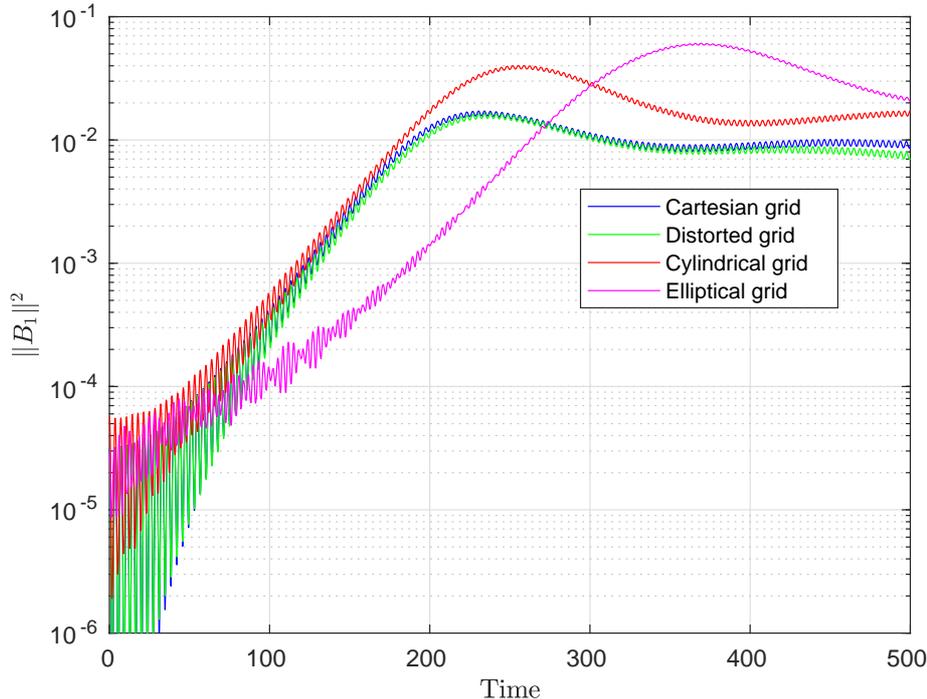}
\caption{Weibel instability with $\kb = 1.25 \hat{\textbf{e}}_z$: First component of the magnetic field energy for DisGradE with time step $\Delta t = 0.1$ for various domain deforming mappings.} 
 \label{fig:weibel_domaindeforming}
\end{figure}
In Figure \ref{fig:weibel_domaindeforming}, we see the first component of the magnetic field initialised with $B_1(x,z)=\beta \sin\left(\frac{\pi x}{L_x}\right) \cos(k_z z)$ for various domain deforming mappings. The simulation results are obtained by the DisGradE method with a time step of $\Delta t=0.1$. For all mappings, we see a growth in the magnetic field. However, the growth rates differ as well as the saturation level, especially for the elliptical grid, where the saturation is higher compared to the other cases.

\subsection{Conservation Properties}
Conclusively, let us take a look at the conservation properties of the different time integrators on the deformed mapped grids.  
\begin{table}[htbp]
\centering
\caption{Weibel instability with perfect conductor boundary conditions: Maximum error in Gauss' law and in the total energy until time 500 for the semi-explicit and semi-implicit time integrators on various grids with time step $\Delta t = 0.1$ for the results in black and $\Delta t = 0.01$ for the results in \textcolor{gray}{gray}.}
\begin{tabular}{|c||c||l|l|l|l|}
\hline
&Method & Cartesian & Distorted & Cylindrical & Elliptical  \\
\hline
\multirow{3}{*}{Gauss} &HS & $ 1.8\cdot 10^{-11}$ & $ 5.2\cdot 10^{-10}$ & $ \color{gray}{ 6.3 \cdot 10^{-10}}$& $ \color{gray}{5.8 \cdot 10^{-10}}$\\
\cline{2-6}
&CEF & $ 1.8 \cdot 10^{-11}$ & $ 5.2 \cdot 10^{-10}$ & $ \color{gray}{6.4 \cdot 10^{-10}}$& $ \color{gray}{5.6 \cdot 10^{-10}}$ \\
\cline{2-6}
&DisGradE & $ 3.4 \cdot 10^{-4}$ & $  4.0 \cdot 10^{-4}$&  $ 2.1 \cdot 10^{-3}$& $ 4.7 \cdot 10^{-3}$\\
\hline
\multirow{3}{*} {Energy} &HS  & $ 1.8 \cdot 10^{-4}$ & $ 1.5 \cdot 10^{-4}$ &  $ \color{gray}{1.5 \cdot 10^{-6}} $& $ \color{gray}{2.0\cdot 10^{-6}}$\\
\cline{2-6}  
&CEF  &  $ 1.8 \cdot 10^{-4}$ & $ 3.7 \cdot 10^{-3}$ & $ \color{gray}{1.8 \cdot 10^{-6}}$& $\color{gray}{7.0\cdot 10^{-7}}$\\
\cline{2-6}  
&DisGradE & $ 2.0 \cdot 10^{-11}$ & $ 9.2 \cdot 10^{-12}$ & $ 5.0 \cdot 10^{-12}$& $  1.1\cdot 10^{-11}$\\
\hline
\end{tabular}
\label{table:weibelboundary}
\end{table}
Table \ref{table:weibelboundary} shows the conservation properties of the four time integrators until $T=500$ with a time step of $\Delta t = 0.1$ except for the semi-explicit schemes on the radial grids, where stability constraints restrict to a time step of $\Delta t = 0.01$. The distortion parameter is chosen as $\epsilon=0.05$.  We see the difference between the energy and the charge conserving methods. As expected, the implicit DisGradE  method conserves the total energy whereas for the semi-explicit HS and CEF schemes the energy is not conserved. Though, the energy error is bounded for the semi-explicit schemes. In contrast, the semi-explicit HS and CEF schemes conserve Gauss' law, which is not the case for the DisGradE method. Note that all conservation properties are up to the tolerance of the solver times the condition number of the mass matrices. 

\section{Conclusion and Outlook}
\label{sec:conclusion}
In this paper, we have investigated the natural boundary conditions of the weak formulation of Maxwell's equations and constructed new basis functions from clamped basis splines that form a discrete de Rham sequence. 
Furthermore, we have presented a fast and efficient preconditioner based on the eigenvalues of the mass matrix on a periodic hyperrectangle for the conjugate gradient solvers of the mass matrices of our system. 
Then, we have applied perfect conductor boundary conditions for the fields and reflecting boundary conditions for the particles enabling the use of domain deforming coordinate transformations such as cylindrical or elliptical mappings. 
In a simulation of the Weibel instability, which was inspired by a similar approach in \cite{chacon2019}, we have studied the effect of the deformed mapped grids and verified the compatibility of the boundary conditions with the conservation properties of the structure preserving discretisations. 

Following the recent publication of Toshniwal \& Hughes \cite{toshniwal2021isogeometric}, as future work, it would be possible to construct smooth spline basis functions that form a de Rham complex at the pole as described in \cite{patrizi2021isogeometric}. The implementation of such basis functions for the GEMPIC framework would enable the use of radial grids with a singularity.

\section{Acknowledgements}

Fruitful discussions with Hendrik Speelers are gratefully acknowledged.

\appendix
\section{Divergence Theorem}

\begin{prop}
For the integration by parts, we use the divergence theorem in the following forms:
\begin{itemize}
\item The standard form for a scalar function $g$ and a vector field $\Fb$,
{\footnotesize
\begin{align}\label{scalardivergencethm}
\int_\Omega \nabla \cdot ( \Fb g  ) \du\xb=\int_\Omega g (\nabla \cdot \Fb) \du\xb =-\int_\Omega  \nabla g \cdot  \Fb  \du\xb + \int_{\partial \Omega} g (\Fb \cdot \bn) \du\sigma, 
\end{align}}

\item The cross product form for vector fields, $\Fb, \Gb$,
{\footnotesize
\begin{align}\label{crossdivergencethm}
\int_\Omega \nabla \times ( \Fb \cdot \Gb) \du\xb =
\int_\Omega \Gb \cdot (\nabla \times \Fb) \du\xb -\int_\Omega (\nabla \times \Gb) \cdot \Fb  \du\xb =  \int_{\partial \Omega} (\Fb \times \Gb) \cdot \bn \du\sigma.
\end{align}}

\end{itemize}
\end{prop}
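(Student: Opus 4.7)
The plan is to derive both identities directly from the classical divergence theorem for a smooth vector field $\Hb$ on $\Omega$, namely $\int_\Omega \nabla \cdot \Hb \, \du\xb = \int_{\partial \Omega} \Hb \cdot \bn \, \du\sigma$, combined in each case with a standard product rule from vector calculus. Throughout, I assume sufficient regularity of $g$, $\Fb$, $\Gb$ and of $\partial\Omega$ (Lipschitz suffices) so that the boundary integrands are well-defined; a density argument then extends the resulting integration-by-parts formulas to the natural Sobolev spaces $H^1(\Omega)$, $H(\curl,\Omega)$, $H(\div,\Omega)$ that appear throughout the paper.

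For the scalar form \eqref{scalardivergencethm}, I would first invoke the Leibniz rule $\nabla \cdot (g\Fb) = g(\nabla \cdot \Fb) + \nabla g \cdot \Fb$. Setting $\Hb := g\Fb$ in the classical divergence theorem gives
\[
\int_\Omega g(\nabla \cdot \Fb)\,\du\xb + \int_\Omega \nabla g \cdot \Fb \,\du\xb = \int_{\partial\Omega} g(\Fb \cdot \bn)\,\du\sigma,
\]
and isolating $\int_\Omega g(\nabla \cdot \Fb)\,\du\xb$ produces the claim.

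For the cross-product form \eqref{crossdivergencethm}, I would use the vector-calculus identity $\nabla \cdot (\Fb \times \Gb) = \Gb \cdot (\nabla \times \Fb) - \Fb \cdot (\nabla \times \Gb)$. Setting $\Hb := \Fb \times \Gb$ in the classical divergence theorem and substituting the identity yields
\[
\int_\Omega \Gb \cdot (\nabla \times \Fb)\,\du\xb - \int_\Omega \Fb \cdot (\nabla \times \Gb)\,\du\xb = \int_{\partial \Omega}(\Fb \times \Gb)\cdot \bn\,\du\sigma,
\]
which is exactly the stated relation.

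The argument is essentially mechanical once the appropriate product rule is identified, so I anticipate no real obstacle. The only point requiring care is the regularity of the traces: for the scalar form the boundary term is controlled by the standard $H^1 \times H(\div)$ duality pairing, while for the cross-product form it is the $H(\curl) \times H(\curl)$ pairing that justifies the surface integral. In both cases Lipschitz regularity of $\partial\Omega$ is sufficient, and the density of $C^\infty(\overline{\Omega})$ in each Sobolev space promotes the classical identity to the weak setting used in the body of the paper.
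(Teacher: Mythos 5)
Your proof is correct and is precisely the standard argument this proposition rests on: the paper states it in the appendix without any proof, and your derivation---the classical divergence theorem applied to $\Hb:=g\Fb$ and $\Hb:=\Fb\times\Gb$ together with the product rules $\nabla\cdot(g\Fb)=g(\nabla\cdot\Fb)+\nabla g\cdot\Fb$ and $\nabla\cdot(\Fb\times\Gb)=\Gb\cdot(\nabla\times\Fb)-\Fb\cdot(\nabla\times\Gb)$, extended by density to $H^1(\Omega)$, $H(\curl,\Omega)$, $H(\div,\Omega)$ on a Lipschitz domain---is exactly the intended one, with the correct trace pairings identified. Note that your write-up also silently repairs two defects in the statement as printed: in \eqref{crossdivergencethm} the leftmost integrand should read $\nabla\cdot(\Fb\times\Gb)$ rather than $\nabla\times(\Fb\cdot\Gb)$ (the curl of a scalar is undefined, and only with the divergence does the chain of equalities hold), and in \eqref{scalardivergencethm} the first equality $\int_\Omega\nabla\cdot(g\Fb)\du\xb=\int_\Omega g(\nabla\cdot\Fb)\du\xb$ is not an identity, since the two integrands differ by $\nabla g\cdot\Fb$; the operative content is the second equality, which your argument establishes.
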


\end{document}